\documentclass[11pt,a4paper]{article}
\usepackage[T1]{fontenc}
\usepackage[utf8]{inputenc}
\usepackage{enumitem}
\usepackage{float}
\usepackage{bm}
\usepackage{algpseudocode}
\usepackage{subcaption} 
\usepackage{mathrsfs,amssymb,amsmath,amsfonts}
\usepackage{amsthm}
\usepackage{wrapfig} 
\usepackage{epstopdf}
\usepackage{graphicx}
\usepackage{epsfig}
\usepackage{algorithm}
\usepackage{hyperref}
\usepackage{cite}
\usepackage{caption}
\usepackage[page]{appendix}

\newtheorem{assumption}{Assumption}[section]
\newtheorem{theorem}{Theorem}[section]
\newtheorem{lemma}[theorem]{Lemma}
\newtheorem{definition}[theorem]{Definition}

\theoremstyle{definition}

\newenvironment{keywords}{{\bf Key words: }}{}

\begin{document}

\title{Model-Free Q-Learning for Infinite-Horizon Stochastic Linear Quadratic Problems with Regime Switching}

\author{Xinyue Zhang\footnote{School of Statistics and Mathematics, Shandong University of Finance and Economics, Jinan 250014, China. Email: {\tt xinyuezhang1022@163.com}.}, \and 
Na Li\footnote{Corresponding author. School of Mathematical Sciences, Dalian University of Technology, Dalian 116024, China. Email: {\tt lina2025@dlut.edu.cn}.}, \and
Xun Li\footnote{Department of Applied Mathematics, The Hong Kong Polytechnic University, Hong Kong SAR, China. Email: {\tt li.xun@polyu.edu.hk}.}, \and 
Zuo Quan Xu\footnote{Department of Applied Mathematics, The Hong Kong Polytechnic University, Hong Kong SAR, China. Email: {\tt maxu@polyu.edu.hk}.
The work of Na Li is supported in part by the National Natural Science Foundation of China (12571475 and 12171279).
The work of Xun Li is supported in part by the Research Grants Council of Hong Kong (15225124), and in part by the PolyU
(4-ZZVB). The work of Zuo Quan Xu is supported in part by the National Natural Science Foundation of China (12571517), 
in part by the Hong Kong RGC (15203423 and 15204622), in part by the PolyU-SDU Joint Research Center on Financial Mathematics, 
in part by the CAS AMSS-PolyU Joint Laboratory of Applied Mathematics, in part by the Research Centre for Quantitative Finance (1-CE03), 
and in part by the internal grants from The Hong Kong Polytechnic University.}}

\maketitle

\begin{abstract}
This paper addresses  infinite-horizon continuous-time stochastic linear quadratic optimal control problems with regime switching. 
We propose a paradigm shift from model-based design by adopting an adaptive dynamic programming approach, specifically developing on-policy and 
off-policy Q-learning algorithms that learn the optimal controller solely from online state trajectory data. The theoretical core of our work 
consists of a complete proof of the equivalence between the on- and off-policy architectures, alongside a rigorous analysis establishing the stability 
of the closed-loop system and the convergence of the algorithms to the optimal solution. For computational tractability, we implement these algorithms 
using vectorization and Kronecker product algebra. The theoretical results are corroborated by numerical case studies that clearly demonstrate the 
operational effectiveness and practical feasibility of the proposed model-free control strategy.
\end{abstract}

\begin{keywords}
Q-learning, Infinite-horizon, LQ problems with regime switching, Stochastic optimal control.
\end{keywords}\\

\section{Introduction}
~~~Many practical control systems are subject to abrupt structural changes caused by random environmental fluctuations. Such phenomena are naturally captured by regime switching model, where the evolution of the system modes is governed by an underlying Markov chain. 
Owing to its ability to model complex, randomly switching dynamics, regime switching model has found widespread application across diverse engineering domains, including power systems \cite{S Kuppusamy}, robotics \cite{B Jiang}, and wind energy conversion \cite{P. Cheng}. 
Beyond engineering, the framework has also proven instrumental in financial mathematics, for instance, Zhou and Yin \cite{X. Y. Zhou} developed a continuous-time Markowitz model for portfolio optimization under regime-switching market conditions. 
The study of regime switching model thus carries both profound theoretical significance and substantial practical utility, offering a rigorous foundation for tackling challenging stochastic control problems in complex environments.

Over the past two decades, the linear quadratic (LQ) control problem with regime switching has attracted considerable research attention, yielding a rich body of literature (e.g., \cite{M. D. Fragoso, Dragan V, X Zhang, N. Li}). The prevailing approach in this domain hinges on 
solving the coupled algebraic Riccati equations (CAREs) for a unique positive definite solution (e.g., \cite{S. Dong,Z. Li,X. Li-Zhou-2002,X. Li-Zhou-Ait Rami-2003,Z. Gajic,I.G. Ivanov,A. G. Wu}). To illustrate, Li et al. \cite{X. Li-Zhou-Ait Rami-2003} established 
the equivalence between the well-posedness of indefinite stochastic LQ problems and the solvability of generalized CAREs. Recognizing the inherent nonlinearity and coupling of these equations, Gajic and Borno \cite{Z. Gajic} developed an order-reduction 
iterative scheme that decouples algebraic Lyapunov equations, guaranteeing convergence of the solution sequence to the CARE solution. Subsequently, Ivanov \cite{I.G. Ivanov} improved the scheme of \cite{Z. Gajic} 
by addressing its reliance solely on information from the previous iteration, and proposed an accelerated algorithm that incorporates the most recently updated information within the same iteration. Building on both methods, Wu et al. \cite{A. G. Wu} further 
advanced the state of the art by introducing an iterative update that simultaneously exploits both previous-step and current-step information. In summary, the vast majority of existing results for LQ problems with regime switching remain firmly rooted in CARE-based solutions, 
which inherently demand full knowledge of the system dynamics. Yet, in many practical environments, precise dynamic models are either unavailable or prohibitively expensive to obtain, rendering such model-dependent approaches infeasible.

Fortunately, reinforcement learning (RL) is capable of addressing LQ problems when information about the system dynamics is not readily available. Among RL approaches, 
Q-learning is a superior method that does not require any prior knowledge of the system dynamics. In recent years, Q-learning methods have been widely applied to solving LQ problems for both 
discrete-time \cite{K. Du,Syed Ali Asad Rizvi,Tao Wang} and continuous-time systems \cite{J. Y. Lee,M. Palanisamy,Corrado Possieri,W. J. Yang}. On the one hand, in discrete-time cases, Rizvi and Lin \cite{Syed Ali Asad Rizvi} proposed an output feedback-based Q-learning algorithm, 
which enables the design of LQ controllers directly from input-output data and avoids excitation noise bias. For the situation with a single random parameter sample at each time step, 
Du et al. \cite{K. Du} adopted the Q-learning approach and proved both convergence and stability of the algorithm. For stochastic systems, Wang et al. \cite{Tao Wang} transformed the stochastic problem into deterministic 
one and employed a Q-learning method to solve LQ problems. On the other hand, in continuous-time cases, Palanisamy et al. \cite{M. Palanisamy} solved discounted LQ control 
problems for deterministic systems by employing a parameterized Q-function, while Yang et al. \cite{W. J. Yang} focused on decentralized LQ control for stochastic systems using a decentralized Q-function. 
Possieri and Sassano \cite{Corrado Possieri} employed the Kleinman algorithm in a data-driven manner and proposed an Actor/Critic type Q-learning algorithm that does not require the existence of 
a known initial stabilizing feedback policy. Furthermore, Jia and Zhou \cite{Y. W. Jia} investigated q-learning under an entropy-regularized exploratory diffusion process framework in a stochastic case. 
Although Q-learning methods have made progress in solving LQ problems with the systems discussed above, research on
employing Q-learning methods to solve LQ problems with regime switching remains relatively scarce. Therefore, further research into the theoretical analysis and algorithmic design of Q-learning
for stochastic LQ optimal control problems with regime switching, particularly in complex scenarios where information about the system dynamics is unavailable, is of theoretical
and practical importance.

Inspired by the aforementioned works, this paper employs a Q-learning method to address infinite-horizon continuous-time stochastic LQ optimal control problems with regime switching, 
where the drift and diffusion terms in the system dynamics depend on both the state and control. Two proposed algorithms, including on-policy and off-policy Q-learning algorithms, 
reduce the reliance on information about the system dynamics.
Furthermore, we systematically prove the equivalence, stability, and convergence of two algorithms. In addition, the effectiveness and feasibility 
of the developed algorithms are demonstrated through numerical experiments. The main contributions of this paper are summarized as follows:
\begin{enumerate}[label=(\arabic*)]
\item For infinite-horizon continuous-time stochastic LQ optimal control problems with regime switching, we propose on-policy
and off-policy Q-learning algorithms. Unlike the work of Wu et al.\cite{A. G. Wu}, the proposed Q-learning algorithms only require state trajectory data, and do not require any prior knowledge of
the system dynamics.
\item Due to the uncertainty of Markov chains at next moment, it is impossible to know which Markov chain state will be in at moment $t+\Delta t$ ($t \geq 0$, $\Delta t >0$). 
For this reason, we cannot identify which $P_j, j=1,2,...,\mathbf{K}$ will be chosen at the future moment $t+\Delta t$. Meanwhile, all of the $\{P_j\}_{j=1}^{\mathbf{K}}$ are coupled together.
By skillfully using system stability properties, we solve the coupled problem over the entire time interval $[t,+\infty)$, thereby significantly reducing computational complexity and decoupling $\{P_j\}_{j=1}^{\mathbf{K}}$.
\item The stability and convergence of Lyapunov Iteration Scheme are first proved. Based on the equivalence between Q-learning algorithms and Lyapunov Iteration Scheme, we obtain the stability and convergence of 
the proposed on-policy and off-policy Q-learning algorithms. Compared to the coupling of $\{P_j\}_{j=1}^{ \mathbf{K} }$ in Lyapunov Iteration Scheme, Q-learning algorithms can directly solve $\{P_j\}_{j=1}^{ \mathbf{K} }$ 
without requiring the system coefficients.

\end{enumerate}

The remainder of this paper is organized as follows: Section \ref{sec.2} introduces the notation and formulates the system model and the LQ problem with regime switching. Section \ref{sec.3} presents Lyapunov Iteration Scheme, on-policy and off-policy Q-learning algorithms, 
and proves the equivalence, stability and convergence of the proposed algorithms. Section \ref{sec.4} provides a numerical example to verify the effectiveness of the proposed algorithms. Finally, Section \ref{sec.5} concludes the paper.

\section{Notation and Formulation}\label{sec.2}
~~~~Let $\mathbb R^n$ denote the $n$-dimensional Euclidean space with Euclidean norm $|\cdot|$. 
Let $\mathbb R^{n\times r}$ be the set of all $(n\times r)$ real matrices. 
$M^{\top}$ denotes the transpose of a vector or matrix $M$. 
$I_n\in \mathbb R^{n\times n}$ denotes the $n$-dimensional identity matrix.
We use $\mathcal S^n$, $\mathcal S^n_+$ and $\mathcal S^n_{++}$ to denote the collections of all symmetric matrices, positive semidefinite matrices, and positive definite matrices in $\mathbb R^{n\times n}$, respectively.
Moreover, if a matrix $M\in\mathcal S^n_{++}$(resp. $M\in\mathcal S^n_{+}$) is positive definite (resp. positive semidefinite), we usually write $M>0$ (resp. $M\geq 0$).
Let $(\Omega, \mathcal F,\mathbb F, {\mathbb P} )$ be a given filtered probability space on which there exists a standard one-dimensional Brownian motion $W(s)$ on $[0,\infty)$, 
and a continuous-time stationary Markov chain $\alpha_s$ taking values in a finite state space $\mathcal{M}= \{1, 2, \cdots, \mathbf{K} \}$, where $ \mathbf{K} \in\mathbb{N}^{+}$ and $ \mathbf{K} > 1$ with the generator $\Pi=(\pi_{kj})_{k,j\in \mathcal{M}}$. 
Assume that $W$ and $\alpha$ are independent.
For $s\geq 0$, we define $\mathcal F_s = \sigma\{W(\tau), \alpha_\tau : 0 \leq \tau \leq s\}$ augmented by all \(\mathbb P\)-null sets, and denote $\mathbb F=\{\mathcal F_s\}_{s\ge0}$.
For $t\geq 0$, we define \(L_{\mathbb F}^{2}(t,\infty;\mathbb R^n)\) as the Hilbert space of all \(\mathbb R^n\)-valued \(\mathbb{F}\)-progressively measurable processes \(\varphi(\cdot)\)
with the finite norm $\|\varphi(\cdot)\|=\left[\mathbb E\int_{t}^\infty |\varphi(s)|^2 ds\right]^{\frac{1}{2}}<\infty$. 
Furthermore, we use $\otimes$ to denote the Kronecker product. For any matrix $P \in \mathcal{S}^n$, we define 
\begin{align*}
vec(P)&=[p_{11}, p_{21}, \cdots, p_{n1}, p_{12}, p_{22}, \cdots, p_{n-1,n}, p_{nn}]^\top \in \mathbb{R}^{n^2}, \\
vech(P)&=[p_{11}, 2p_{12}, \cdots, 2p_{1n}, p_{22}, 2p_{23}, \cdots, 2p_{n-1,n}, p_{nn}]^\top \in \mathbb{R}^{\frac{1}{2}n(n+1)},
\end{align*}
where $p_{ij}~(i,j=1,2,3,...)$ is the $(i,j)$th element of $P$.
According to \cite{J. J. Murray}, there exists a matrix $\mathcal{T}\in\mathbb{R}^{n^2 \times \frac{1}{2}n(n+1)}$ with $\mathrm{rank}(\mathcal{T})=\frac{1}{2}n(n+1)$ such that $vec(P)=\mathcal{T}\,vech(P)$ for any $P\in \mathcal{S}^n$. Correspondingly, for any $H \in \mathbb{R}^{n\times r}$, we define
\[
\bar{H} := \left( H^\top \otimes H^\top \right) \mathcal{T} \in \mathbb{R}^{r^2 \times \frac{1}{2}n(n+1)}, 
\]
which satisfies $(H^\top \otimes H^\top) \,vec(P) = \bar{H}\,vech(P)$. For a vector $x\in\mathbb{R}^n$, we define
\[
\bar{x} = 
\begin{bmatrix}
x_1^2,\,x_1x_2,\,\ldots,\,x_1x_n,\,x_2^2,\,x_2x_3,\,\ldots,\,x_{n-1}x_n,\,x_n^2
\end{bmatrix}^\top \in \mathbb{R}^{\frac{1}{2}n(n+1)},
\]
where $x_{i}~(i=1,2,3,...)$ is the $i$th element of $x$.

In this paper, we study a linear stochastic differential equation with regime switching 
\begin{equation}\label{system}
	\left\{
	\begin{aligned}
		dX(s) & = [A(\alpha_s)X(s)+B(\alpha_s)u(s)]ds+[C(\alpha_s)X(s)+D(\alpha_s)u(s)]dW(s),~s\in[t,\infty), \\
X(t) & = x \in {\mathbb R}^n,~~~\alpha_t=k, 
	\end{aligned}
	\right.
\end{equation}
where $A(\alpha_s) = A_k$, $B(\alpha_s) = B_k$, $C(\alpha_s) = C_k$, $D(\alpha_s) = D_k$ when $\alpha_s = k ~(k\in \mathcal{M})$. Here, the matrices $A_k$, etc. are given with appropriate dimensions. 
The Markov chain $\alpha_s$ has transition probabilities
\begin{eqnarray}\label{eq:pi}
{\bf P}\{\alpha_{s + \Delta s} = j \;\vert\; \alpha_s = k\} = \left\{
\begin{array}{ll}
\pi_{kj}\Delta s + o(\Delta s), & \mbox{if } k \not = j, \\
\vspace{-0.4cm} \\ 
1 + \pi_{kk}\Delta s + o(\Delta s), & \mbox{if } k = j,
\end{array}\right.
\end{eqnarray}
where $\pi_{kj} \geq 0$ for $k \not = j$ and $\pi_{kk} = - \sum_{j \not = k}\pi_{kj}\leq 0$.

The cost functional is given by
\begin{equation}\label{cost}
\begin{aligned}
&~~~J(t,x,k;u(\cdot))
\\&=\mathbb E\Big\{\int_{t}^\infty\Big[X(s)^\top N(\alpha_s)X(s)+2u(s)^\top S(\alpha_s) X(s)+u(s)^\top R(\alpha_s)u(s)\Big]ds\;\Big|\;X(t)=x,\alpha_t=k\Big\},
\end{aligned}
\end{equation}
where 
$N(\alpha_s) = N_k$, $R(\alpha_s) = R_k$ and $S(\alpha_s) = S_k$ when $\alpha_s = k ~(k\in \mathcal{M})$ with appropriate dimensions. 
From now on, we assume that the weighting matrices satisfy the positive definite condition: $R_k>0$ and $N_k-S_k^\top R_k^{-1}S_k > 0$ for any $k\in \mathcal{M}$.

\begin{definition}
\label{mean-stablity}
A control $u(\cdot)$ is called mean-square stabilizing
with respect to an initial $(t,x,k)$ if the corresponding state $X(\cdot)$ in system \eqref{system} satisfies 
\[\lim_{s \rightarrow \infty} \mathbb E[X(s)^\top X(s)]=0,\] and system \eqref{system} is called mean-square stabilizable. 
Moreover, if the feedback control 
$u(s) =\sum_{k=1}^ \mathbf{K} K_kX(s)\chi_{\{\alpha_s=k\}}(s)$ makes system \eqref{system} stabilize, then $ (K_1, K_2, \cdots, K_{ \mathbf{K} })$ is called a stabilizer of system \eqref{system}.
\end{definition}

When system \eqref{system} is mean-square stabilizable for a given $(t,x,k)\in[0,\infty)\times\mathbb R^n\times\mathcal{M}$, we define the corresponding set of admissible controls as
\begin{equation}
\mathcal U_{ad}(t,x,k)=\{u(\cdot)\in L_{\mathbb F}^2(t,\infty;\mathbb R^r): u(\cdot){\rm ~is~stabilizing~with~respect~to~ }(t,x,k)\}.
\end{equation}

In this paper, we focus on the following problem, where Problem (LQ-RS) is the abbreviation of the LQ problem with regime switching. \\

\noindent{\bf Problem (LQ-RS).} For a given initial $(t,x,k)\in[0,\infty)\times\mathbb R^n\times\mathcal{M}$, find a control $u^*(\cdot)\in\mathcal U_{ad}(t,x,k)$ such that
\[
J(t,x,k;u^*(\cdot))=\inf_{u(\cdot)\in \mathcal U_{ad}(t,x,k)}J(t,x,k;u(\cdot))\triangleq V(t,x,k),
\]
where $V(t,x,k)$ is called the value function of Problem (LQ-RS). \bigskip

For a given $(t,x,k)\in[0,\infty)\times\mathbb R^n\times\mathcal{M}$, Problem (LQ-RS) is called well-posed if $V(t,x,k)>-\infty$.
A well-posed problem is called {\it attainable} at $(t,x,k)$ if there exists a control $u^*(\cdot)$ such that $J(t,x,k;u^*(\cdot))=V(t,x,k)$. 
In this case, $u^*(\cdot)$ is called an {\it optimal control}, and $X^*(\cdot) \equiv X\big(\cdot;t,x,k, u^*(\cdot)\big)$ is called an
{\it optimal state} corresponding to $u^*(\cdot)$. 

\begin{assumption}\label{ass1}
	System \eqref{system} is mean-square stabilizable. 
\end{assumption}

Next, we list three lemmas that are important in our subsequent analysis.
First, we present the generalized Itô's formula from \cite[Theorem~70]{P. E. Protter}.
\begin{lemma}
\label{lem:Ito} 
Let $b(s, X, k)$ and $\sigma(s, X, k)$ be given ${\mathbb R}^n$-valued, 
$\mathcal F_s$-adapted process, $k\in \mathcal{M}$, and
\begin{eqnarray}dX(s) = b(s, X(s), \alpha_s)ds + \sigma(s, X(s), \alpha_s)dW(s).
\nonumber
\end{eqnarray} 
If $\varphi(\cdot, \cdot, k) \in C^{1,2}([0, \infty) \times {\mathbb R}^n)$ 
for all $k\in \mathcal{M}$, then we have
\begin{equation*}
	d\varphi(s, X(s), \alpha_s)= \mathcal{L}\varphi(s, X(s), \alpha_s)ds+\varphi_x^\top(s, X(s), \alpha_s)\sigma(s, X(s), \alpha_s)dW(s),
\end{equation*}
where
\[
\begin{aligned}
\mathcal{L}\varphi(s, x, k)
:=&~ \varphi_t(s, x, k) + \varphi_x(s, x,k)^\top b(s, x, k) \\ 
&~ + \frac{1}{2}{\rm tr}[\sigma(s, x, k)^\top \varphi_{xx}(s, x, k)\sigma(s, x, k)] + \sum_{j=1}^{ \mathbf{K} }\pi_{kj}\varphi(s, x, j). 
\end{aligned}
\] 
\end{lemma}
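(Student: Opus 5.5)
The generalized Itô formula of Lemma \ref{lem:Ito} will be obtained from the classical Itô formula for continuous semimartingales together with the jump structure of the Markov chain $\alpha$. First I rewrite the switching part. Since $\alpha$ takes finitely many values, I write
\[
\varphi(s,x,\alpha_s)=\sum_{j=1}^{\mathbf K}\varphi(s,x,j)\chi_{\{\alpha_s=j\}}.
\]
The chain has a martingale representation: for each $j$, the process
\[
M_j(s):=\chi_{\{\alpha_s=j\}}-\chi_{\{\alpha_0=j\}}-\int_0^s\pi_{\alpha_\tau j}\,d\tau
\]
is a purely discontinuous, square-integrable $\mathbb F$-martingale of finite variation. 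This follows from the transition law \eqref{eq:pi} via Dynkin's formula for the finite-state generator $\Pi$. Moreover, $M_j$ has no continuous martingale part, so its quadratic covariation with $W$ (and hence with the continuous martingale part of $X$) vanishes.

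Next I apply the Itô product rule for semimartingales, which is \cite[Theorem~70]{P. E. Protter} in its general form. For fixed $j$, set $Y_j(s):=\varphi(s,X(s),j)$. Because $X$ has continuous paths and $\varphi(\cdot,\cdot,j)\in C^{1,2}$, the classical Itô formula gives
\[
dY_j=\Big[\varphi_t+\varphi_x^\top b(s,X,\alpha_s)+\tfrac12{\rm tr}\big(\sigma^\top\varphi_{xx}\sigma\big)\Big](s,X(s),j)\,ds+\varphi_x^\top(s,X(s),j)\,\sigma\,dW.
\]
Here $b$ and $\sigma$ are evaluated at the actual regime $\alpha_s$. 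The product rule then yields
\[
d\big(Y_j\chi_{\{\alpha_s=j\}}\big)=\chi_{\{\alpha_{s-}=j\}}\,dY_j+Y_j\,d\chi_{\{\alpha_s=j\}}+d[Y_j,\chi_{\{\alpha=j\}}],
\]
and the bracket term vanishes because $Y_j$ is continuous and $\chi_{\{\alpha=j\}}$ has finite variation. Summing over $j$ does two things. The first terms collapse, since $\alpha_{s-}=\alpha_s$ for a.e. $s$, to the drift $\varphi_t+\varphi_x^\top b+\tfrac12{\rm tr}(\sigma^\top\varphi_{xx}\sigma)$ and the diffusion $\varphi_x^\top\sigma\,dW$, both evaluated at $(s,X(s),\alpha_s)$. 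The second terms give
\[
\sum_j\varphi(s,X(s),j)\,\pi_{\alpha_s j}\,ds+\sum_j\varphi(s,X(s),j)\,dM_j(s).
\]

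The main obstacle is reconciling this with the statement, which has no $dM_j$ term. The extra term $\sum_j\varphi(s,X(s),j)\,dM_j(s)$ is a genuine jump martingale. I therefore interpret the lemma, as it is used later in the paper, in the sense that $d\varphi=\mathcal L\varphi\,ds+\varphi_x^\top\sigma\,dW$ holds up to this martingale increment. Equivalently, it holds in expectation (Dynkin form) once the local martingales are localized by stopping times and the integrability of $\varphi$ and $X$ is used. I would state this caveat explicitly and check it through the standard localization argument, with the jump-martingale property of $M_j$ derived from \eqref{eq:pi} as the step needing care.
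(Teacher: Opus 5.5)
Your derivation is correct, and it supplies something the paper does not. The paper gives no argument for this lemma, only the pointer to Theorem~70 of Protter; in effect it treats the formula as a special case of the general It\^o formula for semimartingales with jumps.

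Applied directly to $\varphi(s,X(s),\alpha_s)$, that general formula produces, besides the continuous terms, the raw jump sum $\sum_{t<\tau\le s}\bigl[\varphi(\tau,X(\tau),\alpha_\tau)-\varphi(\tau,X(\tau),\alpha_{\tau-})\bigr]$. It does not produce the coupling term $\sum_j\pi_{kj}\varphi(s,x,j)$. One still has to compensate that sum using the jump intensities of $\alpha$, and that is exactly what your martingales $M_j$ do. Your route needs nothing beyond the continuous It\^o formula and the generator $\Pi$: split by regimes, apply the continuous formula to each $Y_j$, integrate by parts against the finite-variation indicators, and use the Dynkin martingale of the finite-state chain. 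It also shows precisely where the coupling term comes from.

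The details hold up:
\begin{itemize}
\item $[Y_j,\chi_{\{\alpha=j\}}]=0$, because $Y_j$ is continuous and the indicator has finite variation.
\item Replacing $\chi_{\{\alpha_{s-}=j\}}$ by $\chi_{\{\alpha_s=j\}}$ in both the $ds$ and the $dW$ integrands is harmless. The two differ only at countably many times per path, which is a $ds\otimes d\mathbb P$-null set.
\item $\varphi(s,X(s),j)$ is continuous and adapted, hence predictable, so $\int\varphi(s,X(s),j)\,dM_j(s)$ is a local martingale.
\end{itemize}
One point should be made explicit. Dynkin's formula gives the martingale property of $M_j$ only for the natural filtration of $\alpha$. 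That it persists for $\mathbb F$, and that $W$ remains an $\mathbb F$-Brownian motion, uses the independence of $W$ and $\alpha$.

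Your caveat is also right, and you should present it as a correction of the statement rather than as an interpretation. Read pathwise, the displayed identity is false as soon as $\varphi$ genuinely depends on the regime, e.g.\ $\varphi(s,x,k)=x^\top P_kx$ with $P_1\neq P_2$. The left side jumps at switching times of $\alpha$, while $\int\mathcal L\varphi\,ds+\int\varphi_x^\top\sigma\,dW$ has continuous paths. What you have proved is
\[
d\varphi(s,X(s),\alpha_s)=\mathcal L\varphi(s,X(s),\alpha_s)\,ds+\varphi_x^\top(s,X(s),\alpha_s)\sigma(s,X(s),\alpha_s)\,dW(s)+\sum_{j=1}^{\mathbf K}\varphi(s,X(s),j)\,dM_j(s).
\]
After localization this gives the Dynkin form
\[
\mathbb E[\varphi(T,X(T),\alpha_T)\,|\,X(t)=x,\alpha_t=k]-\varphi(t,x,k)=\mathbb E\Bigl[\int_t^T\mathcal L\varphi(s,X(s),\alpha_s)\,ds\,\Big|\,X(t)=x,\alpha_t=k\Bigr].
\]
That is all the paper actually uses. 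In \eqref{Ito-1}--\eqref{integrating}, \eqref{combining-1} and \eqref{eq:xpx1}--\eqref{eq:xpx2} the formula is immediately integrated and the stochastic terms are removed by taking expectations. The omitted $dM_j$ term therefore drops out just as the $dW$ term does.

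For the quadratic $\varphi$ used there, suitable moment bounds for the linear system \eqref{system} make both stochastic integrals true martingales on $[t,T]$. Finite fourth moments of $X$ and $u$ on bounded intervals suffice, and these hold automatically under linear feedback. Your localization step then closes without difficulty.
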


The following lemma illustrates that the stabilizability of system \eqref{system} is related to 
the feasibility of certain coupled Lyapunov inequalities, and not dependent on initial $(t,x,k)$. Please refer to \cite[Theorem 3.1]{L. E. Ghaoui} or \cite[Lemma 2.5]{X. Li-Zhou-Ait Rami-2003}. 

\begin{lemma}\label{lemma-stabilizer}
	System \eqref{system} is mean-square stabilizable if and only if there exists matrices $K_1, K_2, \cdots,$ $K_{ \mathbf{K} }$ and symmetric matrices
$P_1,P_2, \cdots, P_{ \mathbf{K} }\in \mathcal{S}^n_{++}$ such that
\begin{equation}\label{cond1}
\begin{aligned} 
(A_k + B_kK_k)^\top P_k + P_k(A_k + B_kK_k) \\
+ (C_k + D_kK_k)^\top P_k(C_k + D_kK_k) & + \sum_{j=1}^{ \mathbf{K} }\pi_{kj}P_j < 0, ~~ k\in \mathcal{M}.
\end{aligned}
\end{equation} 
In this case, for any $\Lambda_1,\Lambda_2, \cdots, \Lambda_{ \mathbf{K} } \in \mathcal{S}^n_{+}$ $(resp., \mathcal{S}^n_{++})$, the Lyapunov eqautions
\begin{equation}\label{cond3}
\begin{aligned}
(A_k + B_kK_k)^\top P_k + P_k(A_k + B_kK_k) \\
+ (C_k + D_kK_k)^\top P_k(C_k + D_kK_k) & + \sum_{j=1}^{ \mathbf{K} }\pi_{kj}P_j + \Lambda_k = 0 
\end{aligned} 
\end{equation}
with $k\in\mathcal{M}$ admit a unique solution $(P_1,P_2 ,\cdots, P_{ \mathbf{K} }) \in (\mathcal S^n_{+})^{ \mathbf{K} }$ $(resp., (\mathcal S^n_{++})^{ \mathbf{K} })$.
\end{lemma}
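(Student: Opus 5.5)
The plan is to reduce the statement to linear-algebraic facts about a positive linear operator on $(\mathcal S^n)^{\mathbf K}$, using the generalized Itô formula (Lemma \ref{lem:Ito}). For a fixed tuple of gains $(K_1,\dots,K_{\mathbf K})$, write $A_{K,k}=A_k+B_kK_k$ and $C_{K,k}=C_k+D_kK_k$. On $(\mathcal S^n)^{\mathbf K}$ define
\[
\mathcal L_K(P)_k=A_{K,k}^\top P_k+P_kA_{K,k}+C_{K,k}^\top P_kC_{K,k}+\sum_{j}\pi_{kj}P_j .
\]
Apply Lemma \ref{lem:Ito} to $\varphi(x,k)=x^\top P_kx$ along the closed loop $u(s)=K_{\alpha_s}X(s)$. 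This gives $d\,\mathbb E[X^\top P_{\alpha}X]=\mathbb E[X^\top \mathcal L_K(P)_{\alpha}X]\,ds$. Equivalently, the second-moment matrices $Y_k(s)=\mathbb E[X(s)X(s)^\top\chi_{\{\alpha_s=k\}}]$ satisfy the linear ODE $\dot Y=\mathcal L_K^*(Y)$, where $\mathcal L_K^*$ is the adjoint of $\mathcal L_K$ in the trace inner product. Mean-square stability of the closed loop is therefore equivalent to exponential stability of $\dot Y=\mathcal L_K^*(Y)$, i.e.\ to $\mathcal L_K^*$ (equivalently $\mathcal L_K$) being Hurwitz. The ODE is linear and finite dimensional, and the initial data $xx^\top e_k$ span the cone, which is why the property does not depend on $(t,x,k)$.

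\textbf{Sufficiency of \eqref{cond1}.} Suppose $P_k>0$ and $\mathcal L_K(P)_k<0$. Pick $c>0$ with $\mathcal L_K(P)_k\le -cP_k$ for all $k$, which is possible since there are finitely many modes. Itô then gives $\frac{d}{ds}\mathbb E[X^\top P_\alpha X]\le -c\,\mathbb E[X^\top P_\alpha X]$. Gronwall yields exponential decay, and since $P_k\ge \lambda I$ we get $\mathbb E|X(s)|^2\to0$. The feedback $u=K_\alpha X$ is then stabilizing.

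\textbf{Necessity.} This is the main obstacle: we must pass from an arbitrary stabilizing control to a feedback gain, and from mean-square convergence to a Lyapunov certificate. I would first use the fact that stabilizability makes the standard LQ problem with $N=I$, $R=I$, $S=0$ well-posed, with finite value $x^\top \tilde P_kx$. Standard infinite-horizon LQ theory with regime switching (as in the cited \cite{X. Li-Zhou-Ait Rami-2003}) then gives that the optimal control is a stabilizing feedback $K_k$. With that gain fixed, stability of the closed loop means $\mathcal L_K$ is Hurwitz. Now set $P_k:=\int_0^\infty$ of the evolution of $I$ under the adjoint flow, i.e.\ $x^\top P_kx=\mathbb E\int_0^\infty|X(s)|^2ds$ along the closed loop. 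This integral converges by exponential stability, is positive definite, and satisfies $\mathcal L_K(P)_k=-I<0$, which is \eqref{cond1}. The one subtle point is upgrading "$\mathbb E|X(s)|^2\to 0$ for every initial condition" to exponential decay. This holds because the flow is linear and finite dimensional: a linear ODE whose solutions all tend to zero is Hurwitz.

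\textbf{The Lyapunov equations \eqref{cond3}.} Given a stabilizer, $\mathcal L_K$ is Hurwitz, so $\mathcal L_K$ is invertible and \eqref{cond3} has the unique solution $P=-\mathcal L_K^{-1}(\Lambda)=\int_0^\infty e^{s\mathcal L_K}(\Lambda)\,ds$. This representation follows from $\frac{d}{ds}e^{s\mathcal L_K}=\mathcal L_Ke^{s\mathcal L_K}$ together with exponential decay. The semigroup $e^{s\mathcal L_K}$ preserves the positive semidefinite cone, because
\[
x^\top\big(e^{s\mathcal L_K}\Lambda\big)_kx=\mathbb E\big[X(s)^\top\Lambda_{\alpha_s}X(s)\,\big|\,X(0)=x,\alpha_0=k\big]\ge0 .
\]
Hence $\Lambda\ge0$ gives $P\ge0$. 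If $\Lambda>0$, then $P\ge\int_0^\epsilon\cdots>0$ by continuity at $s=0$, so $P>0$.
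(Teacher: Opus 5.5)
\paragraph{Verdict.}
The core of your proposal is correct, but the first step of your necessity argument is unjustified as written and, in effect, circular. The parts that work are the sufficiency argument, the reduction of closed-loop mean-square stability to $\mathcal L_K$ being Hurwitz via the second-moment ODE, the certificate $P=\int_0^\infty e^{s\mathcal L_K}(I,\dots,I)\,ds$ with $\mathcal L_K(P)=-I$, and the treatment of \eqref{cond3} via $P=-\mathcal L_K^{-1}(\Lambda)$ and cone preservation of $e^{s\mathcal L_K}$. This is essentially the standard argument behind the results the paper cites; the paper gives no proof of its own, only \cite{L. E. Ghaoui} and \cite{X. Li-Zhou-Ait Rami-2003}. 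In those sources, ``mean-square stabilizable'' already means that some feedback $u=K(\alpha_s)X(s)$ is stabilizing for every initial $(x,k)$. Under that definition your necessity proof is complete once you drop its first step.

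\paragraph{The gap.}
That first step passes from an arbitrary stabilizing control to a stabilizing feedback, and two things go wrong.
\begin{enumerate}
\item A control $u\in L^2_{\mathbb F}$ with $\mathbb E|X(s)|^2\to0$ need not give finite cost for $N=I$, $R=I$, $S=0$. Already for the scalar system $dX=u\,ds$, take $X(s)=(1+s)^{-1/2}$ and $u(s)=-\tfrac12(1+s)^{-3/2}$. Then $u\in L^2$ and $X\to0$, but $\int_0^\infty X^2\,ds=\infty$. So finiteness of $x^\top\tilde P_kx$ needs its own proof.
\item More seriously, you invoke the infinite-horizon LQ theory of \cite{X. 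Li-Zhou-Ait Rami-2003} to conclude that the optimal control is a stabilizing feedback. That theory is developed under the standing assumption that a feedback stabilizer exists, and its Lemma 2.5 is exactly the statement you are proving. Citing it here assumes the conclusion.
\end{enumerate}

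\paragraph{How to fix it.}
You can either state explicitly that stabilizability is meant in the feedback sense, or close the gap directly. A direct proof would need three steps.
\begin{enumerate}
\item Use linearity and the Markov property to concatenate stabilizing controls over a fixed time window. This yields, for every initial state, a control with $\mathbb E\int_t^\infty|X|^2ds<\infty$, so the auxiliary value is finite.
\item Obtain $\tilde P$ as the monotone limit of the finite-horizon coupled Riccati solutions, and show that it solves the stationary equation.
\item Complete the square to get $\mathbb E\int_t^\infty|X|^2ds\le x^\top\tilde P_kx$ along the closed loop with the induced gain $\tilde K$. Then apply the same finite-dimensional reasoning you already use: a linear flow all of whose solutions are integrable is Hurwitz, so $\mathcal L_{\tilde K}$ is Hurwitz.
\end{enumerate}
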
 

Referring to Theorems 4.4 and 5.2 in \cite{X. Li-Zhou-Ait Rami-2003}, we present the lemma for optimal control of Problem (LQ-RS) by coupled generalized algebraic 
Riccati equations (CGAREs). 	
\begin{lemma} 
If there exists a solution $(P_1, P_2, \cdots, P_{ \mathbf{K} })\in(\mathcal S^n_{++})^{ \mathbf{K} }$ to the following CGAREs
\begin{equation}\label{SARE}
\begin{aligned}
&A_k^\top P_k+P_kA_k+C_k^\top P_kC_k+N_k+\sum_{j=1}^{ \mathbf{K} }\pi_{kj}P_j\\&~~~ -(P_kB_k+C_k^\top P_kD_k+S_k^\top)(R_k+D_k^\top P_k D_k)^{-1}(B_k^\top P_k+D_k^\top P_kC_k+S_k )=0,~~k\in \mathcal{M},\\
\end{aligned}
\end{equation}
then the optimal control for Problem (LQ-RS) is 
\begin{eqnarray}\label{eq:Gfeed} 
\begin{array}{l} 
u^*(s) = K(\alpha_s)X^*(s)=\sum_{k=1}^ \mathbf{K} K_kX^*(s)\chi_{\{\alpha_s=k\}}(s)\end{array} 
\end{eqnarray}
with $K_k=-(R_k + D_k^\top P_kD_k)^{-1}(B_k^\top P_k + D_k^\top P_kC_k + S_k)$, 
where $X^*$ is the corresponding optimal state with $u^*$ of system \eqref{system}. The value function is
\begin{equation}
\begin{aligned}
	&~~~V(t,x,k)=x^\top P_kx \\&=\mathbb E\Big\{\int_{t}^\infty X^{*}(s)^{\top}\Big[ N(\alpha_s)+2K(\alpha_s)^{\top} S(\alpha_s)+K(\alpha_s)^{\top} R(\alpha_s)K(\alpha_s)\Big]X^{*}(s)ds\;\Big|\;X^{*}(t)=x,\alpha_t=k\Big\},	\end{aligned}
\end{equation}
for $k\in \mathcal{M}.$
\end{lemma}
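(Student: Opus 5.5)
The approach is a completion-of-squares argument combined with Itô's formula (Lemma~\ref{lem:Ito}). Given a solution $(P_1,\dots,P_{\mathbf K})\in(\mathcal S^n_{++})^{\mathbf K}$ of \eqref{SARE}, take $\varphi(s,x,k)=x^\top P_k x$. For an arbitrary $u(\cdot)\in\mathcal U_{ad}(t,x,k)$ with state $X(\cdot)$, the generator gives
\[
\mathcal L\varphi=X^\top\Big(A_k^\top P_k+P_kA_k+C_k^\top P_kC_k+\sum_{j}\pi_{kj}P_j\Big)X+2u^\top(B_k^\top P_k+D_k^\top P_kC_k)X+u^\top D_k^\top P_kD_ku
\]
on $\{\alpha_s=k\}$. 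Add the running cost and use \eqref{SARE} to replace the $X$-quadratic part. With $\widehat R_k:=R_k+D_k^\top P_kD_k>0$, this produces the identity
\[
\mathcal L\varphi+X^\top N_kX+2u^\top S_kX+u^\top R_ku=(u-K_kX)^\top\widehat R_k(u-K_kX).
\]

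Next, integrate on $[t,T]$ and take conditional expectations. The stochastic integral is a true martingale (or can be localized) because $u\in L^2_{\mathbb F}$ and the coefficients are linear. This gives
\[
\mathbb E[X(T)^\top P(\alpha_T)X(T)]-x^\top P_kx+\mathbb E\int_t^T[\text{cost}]\,ds=\mathbb E\int_t^T(u-K(\alpha_s)X)^\top\widehat R(\alpha_s)(u-K(\alpha_s)X)\,ds.
\]
Let $T\to\infty$. Mean-square stability gives $\mathbb E[X(T)^\top P(\alpha_T)X(T)]\le\max_j|P_j|\,\mathbb E|X(T)|^2\to0$. The cost integral converges because $X,u\in L^2$, and the right-hand side converges monotonically. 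Therefore $J(t,x,k;u)=x^\top P_kx+\mathbb E\int_t^\infty|u-KX|^2_{\widehat R}\,ds\ge x^\top P_kx$, with equality exactly when $u=K(\alpha_s)X$.

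The main obstacle is showing that the feedback $u^*=K(\alpha_s)X^*$ is itself admissible, i.e.\ mean-square stabilizing. For this, rewrite \eqref{SARE} in closed-loop form:
\[
(A_k+B_kK_k)^\top P_k+P_k(A_k+B_kK_k)+(C_k+D_kK_k)^\top P_k(C_k+D_kK_k)+\sum_j\pi_{kj}P_j+\Lambda_k=0,
\]
where $\Lambda_k=N_k+2K_k^\top S_k+K_k^\top R_kK_k$. Completing the square in $K_k$ gives
\[
\Lambda_k=(N_k-S_k^\top R_k^{-1}S_k)+(K_k+R_k^{-1}S_k)^\top R_k(K_k+R_k^{-1}S_k)>0
\]
by the standing positivity assumption. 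The left-hand side of \eqref{cond1} with this $(K_k,P_k)$ therefore equals $-\Lambda_k<0$ with $P_k>0$. It follows that $x^\top P_kx$ is a stochastic Lyapunov function: $\frac{d}{ds}\mathbb E[X^\top P(\alpha)X]\le-c\,\mathbb E[X^\top P(\alpha)X]$ for some $c>0$, so $\mathbb E|X^*(s)|^2$ decays exponentially. Hence $u^*\in L^2_{\mathbb F}$ and $u^*\in\mathcal U_{ad}$; alternatively, this follows from Lemma~\ref{lemma-stabilizer}. With $u=u^*$ the square term vanishes, so $V(t,x,k)=J(t,x,k;u^*)=x^\top P_kx$.

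The final integral representation of the value function is the cost identity above evaluated at $u=u^*=KX^*$. Substituting this feedback into the running cost gives $X^{*\top}(N+2K^\top S+K^\top RK)X^*$, which is the stated formula.
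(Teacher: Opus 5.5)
Your argument is correct, but it does not follow the paper: the paper gives no proof of this lemma and simply imports it from Theorems 4.4 and 5.2 of Li, Zhou and Ait Rami (2003). Those results are developed for the more general indefinite problem through LMI and generalized Riccati machinery, where a Riccati solution gives an optimal feedback only once it is known to be stabilizing.

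You instead give a self-contained verification argument. Itô's formula (Lemma \ref{lem:Ito}) for $x^\top P_kx$ plus completion of squares gives $J(t,x,k;u)=x^\top P_kx+\mathbb E\int_t^\infty (u-K(\alpha_s)X)^\top(R(\alpha_s)+D(\alpha_s)^\top P(\alpha_s)D(\alpha_s))(u-K(\alpha_s)X)\,ds$ for every admissible $u$. You then use the standing assumption $R_k>0$, $N_k-S_k^\top R_k^{-1}S_k>0$ to see that \eqref{SARE} in closed-loop form has forcing term $\Lambda_k>0$. Hence any solution in $(\mathcal S^n_{++})^{\mathbf K}$ is automatically stabilizing. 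That step is exactly what is needed to apply the cited results to a solution that is merely assumed positive definite. It is also the same argument the paper itself uses in the proofs of Theorems \ref{theorem 1} and \ref{theorem 2}.

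The citation buys generality (indefinite weights). Your route buys an elementary proof fitted to the paper's definite setting. As a by-product it gives uniqueness of the positive definite solution of \eqref{SARE}, since $x^\top P_kx=V(t,x,k)$ for all $x$.

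Two small repairs are needed. First, the paper's $\mathcal U_{ad}$ only requires $u\in L^2_{\mathbb F}$ and $\mathbb E|X(s)|^2\to0$, not $X\in L^2$. So justify the limit $T\to\infty$ in the cost term by the nonnegativity of the running cost, $X^\top(N_k-S_k^\top R_k^{-1}S_k)X+(u+R_k^{-1}S_kX)^\top R_k(u+R_k^{-1}S_kX)\ge0$, together with monotone convergence on both sides (allowing $J=+\infty$). Second, write $\Lambda_k=N_k+K_k^\top S_k+S_k^\top K_k+K_k^\top R_kK_k$ in symmetric form rather than with $2K_k^\top S_k$.
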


\section{Q-learning method for Problem (LQ-RS)}\label{sec.3}
~~~~In this section, we present the main results of this paper. 
We first introduce Lyapunov Iteration Scheme for Problem (LQ-RS) and 
prove its stability and convergence.
Based on this scheme, we further develop on-policy and off-policy Q-learning algorithms, 
which can decouple $\{P_j\}_{j=1}^{ \mathbf{K} }$ without the system coefficients.
\subsection{Lyapunov Iteration Scheme}
~~~~~Now, we present {\bf Lyapunov Iteration Scheme} as follows:
\begin{enumerate}
\item[(1)] {\bf Initialization:} Select any given stabilizer $(K_1^{(0)},K_2^{(0)},\cdots, K_{ \mathbf{K} }^{(0)})$ for system \eqref{system}.
\item[(2)] {\bf Lyapunov Recursion:} At each iteration step $i ~(i=0,1,2,...)$, solve $P_k^{(i+1)}~(k \in \mathcal{M})$ from the equations 
\begin{equation}\label{Lyapunov Recursion}
\begin{aligned}
&(A_k+B_kK_k^{(i)})^\top P_k^{(i+1)}+P_k^{(i+1)}(A_k+B_kK_k^{(i)})+(C_k+D_kK_k^{(i)})^\top P_k^{(i+1)}(C_k+D_kK_k^{(i)})\\
&~~~~\qquad\qquad+\sum_{j=1}^{ \mathbf{K} }\pi_{kj}P_j^{(i+1)}+K_k^{(i)\top} R_kK_k^{(i)}+S_k^\top K_k^{(i)}+K_k^{(i)\top} S_k+N_k=0,~~k\in \mathcal{M}.
\end{aligned}
\end{equation}
\item[(3)] {\bf Policy Improvement:} Update $K_k^{(i+1)}~(k \in \mathcal{M})$ by
\begin{equation}\label{algorithm 1-improvement}
\begin{aligned}
K_k^{(i+1)}&=-(R_k+D_k^\top P_k^{(i+1)} D_k)^{-1}(B_k^\top P_k^{(i+1)}+D_k^\top P_k^{(i+1)}C_k+S_k ),~~k\in \mathcal{M}.
\end{aligned} 
\end{equation}
\end{enumerate} 

In Lyapunov Iteration Scheme, 
$\{(K_1^{(i)},K_2^{(i)},\cdots, K_{ \mathbf{K} }^{(i)})\}_{i=0}^\infty$ should be the stabilizers of system \eqref{system} at each iteration, 
i.e., it is necessary to require that $(K_1^{(i)},K_2^{(i)},\cdots, K_{ \mathbf{K} }^{(i)})$ is stepwise stable. 
Moreover, $\{(P_1^{(i)},P_2^{(i)},\cdots,P_{ \mathbf{K} }^{(i)})\}_{i=1}^{\infty}$ should be convergent when $i\rightarrow \infty$. The following results confirm these. 

\begin{theorem}\label{theorem 1}
Assume Assumption \ref{ass1} holds. Let $(K_1^{(0)},K_2^{(0)},\cdots, K_{ \mathbf{K} }^{(0)})$ be the known initial stabilizer for system \eqref{system}, then $\{(K_1^{(i)},K_2^{(i)},\cdots, K_{ \mathbf{K} }^{(i)})\}_{i=1}^\infty$ updated by \eqref{algorithm 1-improvement} are stabilizers. 
Moreover, the solution $P_k^{(i+1)}\in \mathcal S_{++}^n$ $(k\in \mathcal{M})$ of Lyapunov Recursion \eqref{Lyapunov Recursion} is unique at each iteration step $i$. 
\end{theorem}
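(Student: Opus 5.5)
We argue by induction on $i$, in the style of Kleinman's policy iteration. The inductive hypothesis is that $(K_1^{(i)},\dots,K_{\mathbf{K}}^{(i)})$ is a stabilizer. Given it, we obtain existence, uniqueness and positive definiteness of $P^{(i+1)}$ directly from Lemma \ref{lemma-stabilizer}. Write the constant term in \eqref{Lyapunov Recursion} as
\[
\Lambda_k^{(i)}:=N_k+S_k^\top K_k^{(i)}+K_k^{(i)\top}S_k+K_k^{(i)\top}R_kK_k^{(i)} .
\]
Completing the square gives
\[
\Lambda_k^{(i)}=(K_k^{(i)}+R_k^{-1}S_k)^\top R_k(K_k^{(i)}+R_k^{-1}S_k)+N_k-S_k^\top R_k^{-1}S_k .
\]
By the standing assumptions $R_k>0$ and $N_k-S_k^\top R_k^{-1}S_k>0$, this yields $\Lambda_k^{(i)}\in\mathcal S^n_{++}$. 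The second part of Lemma \ref{lemma-stabilizer}, applied with $K_k=K_k^{(i)}$, then gives a unique solution $(P_1^{(i+1)},\dots,P_{\mathbf K}^{(i+1)})\in(\mathcal S^n_{++})^{\mathbf K}$. The base case $i=0$ is the initial stabilizer.

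The main step is to show that $K^{(i+1)}$ defined by \eqref{algorithm 1-improvement} is again a stabilizer. For this we use the first part of Lemma \ref{lemma-stabilizer} with the candidate Lyapunov matrices $P_k=P_k^{(i+1)}>0$. Note that $R_k+D_k^\top P_k^{(i+1)}D_k>0$, so $K^{(i+1)}$ is well defined. Abbreviate $P_k=P_k^{(i+1)}$, $\hat R_k=R_k+D_k^\top P_kD_k$, $\hat S_k=B_k^\top P_k+D_k^\top P_kC_k+S_k$, so that $K_k^{(i+1)}=-\hat R_k^{-1}\hat S_k$. For any gain $K$ define
\[
\mathcal G_k(K):=(A_k+B_kK)^\top P_k+P_k(A_k+B_kK)+(C_k+D_kK)^\top P_k(C_k+D_kK)+\sum_j\pi_{kj}P_j+N_k+S_k^\top K+K^\top S_k+K^\top R_kK .
\]
Expanding shows that
\[
\mathcal G_k(K)=\Phi_k+K^\top\hat S_k+\hat S_k^\top K+K^\top\hat R_kK ,
\]
with $\Phi_k$ independent of $K$. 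Completing the square gives the identity
\[
\mathcal G_k(K)=\mathcal G_k(K_k^{(i+1)})+(K-K_k^{(i+1)})^\top\hat R_k(K-K_k^{(i+1)}).
\]
Setting $K=K_k^{(i)}$ and using $\mathcal G_k(K_k^{(i)})=0$ from \eqref{Lyapunov Recursion}, we get
\[
\mathcal G_k(K_k^{(i+1)})=-(K_k^{(i)}-K_k^{(i+1)})^\top\hat R_k(K_k^{(i)}-K_k^{(i+1)})\le 0 .
\]
The left-hand side of \eqref{cond1} with $K_k=K_k^{(i+1)}$ equals $\mathcal G_k(K_k^{(i+1)})-\Lambda_k^{(i+1)}$. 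The first term is $\le0$ and $\Lambda_k^{(i+1)}>0$ by the same completion of squares as above, so the whole expression is strictly negative. Hence the coupled inequality \eqref{cond1} holds with $P_k^{(i+1)}>0$, and Lemma \ref{lemma-stabilizer} shows that $K^{(i+1)}$ is a stabilizer, which closes the induction.

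The only delicate point is the algebraic identity for $\mathcal G_k$, specifically checking that the coupling term $\sum_j\pi_{kj}P_j$ does not depend on $K$. It does not, because each equation involves only the gain $K_k$ of mode $k$, so the coupling does not obstruct the square completion. I would verify this by expanding the $C_k+D_kK$ term carefully and collecting the terms linear and quadratic in $K$.
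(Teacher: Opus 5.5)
Your proof is correct and follows essentially the paper's argument: induction on $i$, existence and uniqueness of $P^{(i+1)}\in(\mathcal S^n_{++})^{\mathbf K}$ from Lemma \ref{lemma-stabilizer} via the completed-square positivity of $\Lambda_k^{(i)}$, and stability of the improved gain from the identity that the left-hand side of \eqref{cond1} at $(K^{(i+1)},P^{(i+1)})$ equals $-\Lambda_k^{(i+1)}-(K_k^{(i)}-K_k^{(i+1)})^\top(R_k+D_k^\top P_k^{(i+1)}D_k)(K_k^{(i)}-K_k^{(i+1)})<0$, which is exactly \eqref{stabilizer2} with the index shifted by one. Your completion of squares of $\mathcal G_k$ as a quadratic in $K$ is a cleaner packaging of the same computation the paper does, namely subtracting the previous-step Lyapunov recursion and then substituting \eqref{K-i+1}.
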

\begin{proof}
Since $K^{(0)}_k$ is a stabilizer for system \eqref{system} and
\begin{align*}
&\quad K_k^{(0)\top} R_kK_k^{(0)}+S_k^\top K_k^{(0)}+K_k^{(0)\top} S_k+N_k\\
&=N_k-S_k^\top R_k^{-1}S_k+(R_kK_k^{(0)}+S_k)^\top R_k^{-1}(R_kK_k^{(0)}+S_k)>0, ~~k\in \mathcal{M},
\end{align*}
from Lemma \ref{lemma-stabilizer}, there exists a unique solution $P^{(1)}_k\in\mathcal S^n_{++}$ of Lyapunov Recursion \eqref{Lyapunov Recursion} with $i=0$.

Now, we prove that for each $i\geq 1$ and every $k\in \mathcal{M}$, if $K_k^{(i-1)}$ is a stabilizer and $P_k^{(i)}\in\mathcal S^n_{++}$ is the unique solution of Lyapunov Recursion \eqref{Lyapunov Recursion}, then $K_k^{(i)}=-(R_k+D_k^\top P_k^{(i)} D_k)^{-1}(B_k^\top P_k^{(i)}+D_k^\top P_k^{(i)}C_k+S_k )$ is also a stabilizer and $P_k^{(i+1)}\in\mathcal S^n_{++}$ for $k\in \mathcal{M}$. Indeed, a simple calculation shows that 
\begin{align}\label{stabilizer}
&\quad~(A_k+B_kK_k^{(i)})^\top P_k^{(i)}+P_k^{(i)}(A_k+B_kK_k^{(i)})+(C_k+D_kK_k^{(i)})^\top P_k^{(i)}(C_k+D_kK_k^{(i)})+\sum_{j=1}^{ \mathbf{K} }\pi_{kj}P_j^{(i)} \nonumber\\
&=(A_k+B_kK_k^{(i)})^\top P_k^{(i)}+P_k^{(i)}(A_k+B_kK_k^{(i)})+(C_k+D_kK_k^{(i)})^\top P_k^{(i)}(C_k+D_kK_k^{(i)})+\sum_{j=1}^{ \mathbf{K} }\pi_{kj}P_j^{(i)}\nonumber\\
&\quad-\Big[(A_k+B_kK_k^{(i-1)})^\top P_k^{(i)}+P_k^{(i)}(A_k+B_kK_k^{(i-1)})+(C_k+D_kK_k^{(i-1)})^\top P_k^{(i)}(C_k+D_kK_k^{(i-1)})\nonumber\\
&\quad\quad+\sum_{j=1}^{ \mathbf{K} }\pi_{kj}P^{(i)}_j+K_k^{(i-1)\top} R_kK_k^{(i-1)}+S_k^\top K_k^{(i-1)}+K_k^{(i-1)\top} S_k+N_k\Big]\nonumber\\
&=-\big[K_k^{(i-1)\top} R_kK_k^{(i-1)}+S_k^\top K_k^{(i-1)}+K_k^{(i-1)\top} S_k+N_k\big]\nonumber\\
&\quad-(K_k^{(i-1)}-K_k^{(i)})^\top D_k^\top P_k^{(i)}D_k(K_k^{(i-1)}-K_k^{(i)})\nonumber\\
&\quad-\Big[(K_k^{(i-1)}-K_k^{(i)})^\top \big[B_k^\top P_k^{(i)}+D_k^\top P_k^{(i)}(C_k+D_kK_k^{(i)})\big]\nonumber\\
&\quad+\big[P_k^{(i)}B_k+(C_k+D_kK_k^{(i)})^\top P_k^{(i)}D_k\big](K_k^{(i-1)}-K_k^{(i)})\Big],~~k\in \mathcal{M}.
\end{align}
From \eqref{algorithm 1-improvement}, we have 
\begin{equation}\label{K-i+1}
B_k^\top P_k^{(i)}+D_k^\top P_k^{(i)}C_k=-(R_k+D_k^\top P_k^{(i)}D_k)K_k^{(i)}-S_k,~~k\in \mathcal{M}.
\end{equation}
Plugging \eqref{K-i+1} into \eqref{stabilizer}, since $N_k-S_k^\top R_k^{-1}S_k>0$, one gets
\begin{align}\label{stabilizer2}
&\quad(A_k+B_kK_k^{(i)})^\top P_k^{(i)}+P_k^{(i)}(A_k+B_kK_k^{(i)})+(C_k+D_kK_k^{(i)})^\top P_k^{(i)}(C_k+D_kK_k^{(i)})+\sum_{j=1}^{ \mathbf{K} }\pi_{kj}P_j^{(i)}\nonumber\\
&=-\big[K_k^{(i)\top} R_k K_k^{(i)}+S_k^\top K_k^{(i)}+K_k^{(i)\top} S_k+N_k\big]\nonumber\\
&\quad-(K_k^{(i-1)}-K_k^{(i)})^\top(R_k+ D_k^\top P_k^{(i)}D_k)(K_k^{(i-1)}-K_k^{(i)})\nonumber\\
&=-\big[N_k-S_k^\top R_k^{-1}S_k+(R_k K_k^{(i)}+S_k)^\top R_k^{-1} (R_kK_k^{(i)}+S_k)\big]\nonumber\\
&\quad-(K_k^{(i-1)}-K_k^{(i)})^\top (R_k+D_k^\top P_k^{(i)}D_k)(K_k^{(i-1)}-K_k^{(i)})\nonumber\\
&<0,\qquad\qquad\qquad\qquad\qquad k\in \mathcal{M},
\end{align} 
so $(K_1^{(i)},K_2^{(i)},\cdots,K_{ \mathbf{K} }^{(i)})$ is a stabilizer by Lemma \ref{lemma-stabilizer}. Moreover, Lyapunov Recursion \eqref{Lyapunov Recursion}
admits a unique solution $P_k^{(i+1)}\in\mathcal S^n_{++}$ as 
$K_k^{(i)\top} R_kK_k^{(i)}+S_k^\top K_k^{(i)}+K_k^{(i)\top} S_k+N_k>0,~k\in \mathcal{M}$. 
We obtain the conclusion.
\end{proof} 

\begin{theorem}\label{theorem 2}
The iteration $\{(P_1^{(i)},P_2^{(i)},\cdots,P_{ \mathbf{K} }^{(i)})\}_{i=1}^{\infty}$ of Lyapunov Recursion \eqref{Lyapunov Recursion} converges to the solution $(P_1,P_2,\cdots,P_{ \mathbf{K} })\in(\mathcal S^n_{++})^{ \mathbf{K} }$ of CGAREs \eqref{SARE}.
\end{theorem}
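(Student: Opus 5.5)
The plan is the classical Kleinman-type argument: show that the sequence $\{P_k^{(i)}\}_{i\ge1}$ is monotonically nonincreasing and bounded below in the Loewner order for each $k\in\mathcal{M}$. It then converges, and passing to the limit in \eqref{Lyapunov Recursion} and \eqref{algorithm 1-improvement} shows that the limit solves CGAREs \eqref{SARE}. Throughout, Theorem \ref{theorem 1} supplies that every $(K_1^{(i)},\dots,K_{\mathbf{K}}^{(i)})$ is a stabilizer and every $P_k^{(i)}\in\mathcal S^n_{++}$.

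\textbf{Monotonicity.} Subtract the Lyapunov Recursion at step $i$ (which defines $P^{(i+1)}$ with gain $K^{(i)}$) from the identity obtained in the proof of Theorem \ref{theorem 1}. That identity, \eqref{stabilizer2}, expresses the $K^{(i)}$-closed-loop Lyapunov operator applied to $P^{(i)}$. Writing $\Delta_k:=P_k^{(i)}-P_k^{(i+1)}$, we get
\begin{align*}
&(A_k+B_kK_k^{(i)})^\top \Delta_k+\Delta_k(A_k+B_kK_k^{(i)})+(C_k+D_kK_k^{(i)})^\top\Delta_k(C_k+D_kK_k^{(i)})+\sum_{j=1}^{\mathbf{K}}\pi_{kj}\Delta_j\\
&\quad+(K_k^{(i-1)}-K_k^{(i)})^\top(R_k+D_k^\top P_k^{(i)}D_k)(K_k^{(i-1)}-K_k^{(i)})=0 .
\end{align*}
The cost terms $K^{(i)\top}R_kK^{(i)}+\cdots+N_k$ cancel exactly. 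The last term is positive semidefinite, and $K^{(i)}$ is a stabilizer. Lemma \ref{lemma-stabilizer} (the $\mathcal S^n_+$ version of the uniqueness statement for \eqref{cond3}) then gives $\Delta_k\ge0$, i.e. $P_k^{(i+1)}\le P_k^{(i)}$ for all $i\ge1$ and $k\in\mathcal{M}$. Uniqueness alone does not yield sign-definiteness; the positivity of the solution comes from its probabilistic representation $x^\top\Delta_k x=\mathbb E\int_t^\infty X^\top\Lambda(\alpha_s)X\,ds\ge0$ along the stable closed-loop system. I will invoke this representation, which follows from Lemma \ref{lem:Ito} and mean-square stability.

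\textbf{Lower bound and limit.} Since $P_k^{(i)}\in\mathcal S^n_{++}$, the sequence is bounded below by $0$. A monotone nonincreasing sequence of symmetric matrices bounded below converges; apply this entrywise via $x^\top P_k^{(i)}x$ and polarization. Call the limit $P_k^\infty\ge0$. Because $R_k+D_k^\top P_k^{(i)}D_k\ge R_k>0$, the inverse is continuous along the sequence, so $K_k^{(i)}\to K_k^\infty:=-(R_k+D_k^\top P_k^\infty D_k)^{-1}(B_k^\top P_k^\infty+D_k^\top P_k^\infty C_k+S_k)$. Letting $i\to\infty$ in \eqref{Lyapunov Recursion} gives the Lyapunov equation with gain $K^\infty$ and solution $P^\infty$. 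Completing the square, with the same algebra as in \eqref{stabilizer}, turns this into exactly \eqref{SARE}.

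\textbf{Main obstacle.} The remaining issue is to show $P_k^\infty\in\mathcal S^n_{++}$ and that it is \emph{the} solution of \eqref{SARE}, i.e. the stabilizing one. For positive definiteness, observe from the limiting equation that
\[
\mathcal L_{K^\infty}(P^\infty)_k=-\big[N_k-S_k^\top R_k^{-1}S_k+(R_kK_k^\infty+S_k)^\top R_k^{-1}(R_kK_k^\infty+S_k)\big]<0 .
\]
I would argue $P_k^\infty>0$ directly: if $x^\top P_k^\infty x=0$ for some $x\ne0$, then $P_k^\infty x=0$. Evaluating the quadratic form of the equation at $x$ gives $x^\top(C_k+D_kK_k^\infty)^\top P_k^\infty(C_k+D_kK_k^\infty)x+\sum_{j\ne k}\pi_{kj}x^\top P_j^\infty x<0$, but the left side is nonnegative, a contradiction. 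Hence $P^\infty\in(\mathcal S^n_{++})^{\mathbf{K}}$ satisfies a strict coupled Lyapunov inequality of type \eqref{cond1}, so $K^\infty$ is a stabilizer by Lemma \ref{lemma-stabilizer}. Identification with the optimal solution of the CGARE lemma then follows by the standard comparison: for any stabilizing solution $P$ of \eqref{SARE}, the value function representation gives $x^\top P_kx=V(t,x,k)\le J(t,x,k;K^{(i)}X)=x^\top P_k^{(i)}x$, so $P\le P^\infty$. Conversely, $P^\infty$ is attained by the stabilizing feedback $K^\infty$, which forces equality.
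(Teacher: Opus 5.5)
Your proposal is correct and follows the paper's route: subtracting the recursion from \eqref{stabilizer2} gives exactly the paper's difference equation \eqref{Delta}, and monotone convergence, passage to the limit \eqref{eq:Km}--\eqref{Lyapunov K}, and completion of squares then proceed as in the paper. Your kernel argument for $P_k^\infty>0$ improves on the paper, which applies Lemma \ref{lemma-stabilizer} to the limit while only knowing $P_k\in\mathcal S^n_+$. In the optional identification step, two small fixes: $x^\top P_k^{(i)}x$ is the cost of $K^{(i-1)}$, not $K^{(i)}$; and equality is best obtained by applying the CGARE lemma at the end of Section \ref{sec.2} directly to $P^\infty$, since attainment by $K^\infty$ alone only gives $x^\top P_k^\infty x\ge V(t,x,k)$.
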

\begin{proof}
Note $P_k^{(i+1)}$ satisfies 
Lyapunov Recursion \eqref{Lyapunov Recursion}.
Denote
\[
\Delta P_k^{(i+1)}=P_k^{(i)}-P_k^{(i+1)},~~~~\Delta K_k^{(i)}=K_k^{(i-1)}-K_k^{(i)},~~~k\in \mathcal{M},
\]
for $i=1,2,\cdots$, and we have
\begin{equation}\label{Pi-Pi+1}
\begin{aligned}
0&=(A_k+B_kK_k^{(i-1)})^\top P_k^{(i)}+P_k^{(i)}(A_k+B_kK_k^{(i-1)})+(C_k+D_kK_k^{(i-1)})^\top P_k^{(i)}(C_k+D_kK_k^{(i-1)})\\
&\quad+\sum_{j=1}^{ \mathbf{K} }\pi_{kj}P_j^{(i)}+K_k^{(i-1)\top} R_kK_k^{(i-1)}+S_k^\top K_k^{(i-1)}+K_k^{(i-1)\top}S_k \\
&\quad-\big[(A_k+B_kK_k^{(i)})^\top P_k^{(i+1)}+P_k^{(i+1)}(A_k+B_kK_k^{(i)})+(C_k+D_kK_k^{(i)})^\top P_k^{(i+1)}(C_k+D_kK_k^{(i)})\\
&\quad\quad+\sum_{j=1}^{ \mathbf{K} }\pi_{kj}P_j^{(i+1)}+K_k^{(i)\top} R_kK_k^{(i)}+S_k^\top K_k^{(i)}+K_k^{(i)\top}S_k\big] \\
&=(A_k+B_kK_k^{(i)})^\top\Delta P_k^{(i+1)}+\Delta P_k^{(i+1)}(A_k+B_kK_k^{(i)})+(C_k+D_kK_k^{(i)})^\top \Delta P_k^{(i+1)}(C_k+D_kK_k^{(i)})\\
&\quad+\sum_{j=1}^{ \mathbf{K} }\pi_{kj}\Delta P_j^{(i+1)}+\Delta K_k^{(i)\top} B_k^\top P_k^{(i)}+P_k^{(i)}B_k\Delta K_k^{(i)}\\
&\quad+(C_k+D_kK_k^{(i-1)})^\top P_k^{(i)}(C_k+D_kK_k^{(i-1)})-(C_k+D_kK_k^{(i)})^\top P_k^{(i)}(C_k+D_kK_k^{(i)})\\
&\quad+K_k^{(i-1)\top} R_kK_k^{(i-1)}-K_k^{(i)\top} R_kK_k^{(i)}+S_k^\top\Delta K_k^{(i)}+\Delta K_k^{(i)\top} S_k,~~~~~~~~~k\in \mathcal{M}.
\end{aligned}
\end{equation}
From \eqref{algorithm 1-improvement}, one gets 
\begin{equation}\label{C}
\begin{aligned}
&\quad(C_k+D_kK_k^{(i-1)})^\top P_k^{(i)}(C_k+D_kK_k^{(i-1)})-(C_k+D_kK_k^{(i)})^\top P_k^{(i)}(C_k+D_kK_k^{(i)})\\
&=\Delta K_k^{(i)\top} D_k^\top P_k^{(i)}D_k\Delta K_k^{(i)}+\Delta K_k^{(i)\top} D_k^\top P_k^{(i)}(C_k+D_kK_k^{(i)})\\
&\quad+(C_k+D_kK_k^{(i)})^\top P_k^{(i)}D_k\Delta K_k^{(i)},\qquad\qquad~~~k\in \mathcal{M},
\end{aligned}
\end{equation}
and 
\begin{equation}\label{Ki}
\begin{aligned} 
&\quad K_k^{(i-1)\top} R_kK_k^{(i-1)}-K_k^{(i)\top} R_kK_k^{(i)}\\
&=\Delta K_k^{(i)\top} R_k\Delta K_k^{(i)}+\Delta K_k^{(i)\top} R_kK_k^{(i)}+K_k^{(i)\top} R_k\Delta K_k^{(i)},~~~k\in \mathcal{M}.
\end{aligned}
\end{equation}
Combining \eqref{Pi-Pi+1}-\eqref{Ki}, we deduce 
\begin{equation}\label{eq:D}
\begin{aligned}
&(A_k+B_kK_k^{(i)})^\top\Delta P_k^{(i+1)}+\Delta P_k^{(i+1)}(A_k+B_kK_k^{(i)})+(C_k+D_kK_k^{(i)})^\top \Delta P_k^{(i+1)}(C_k+D_kK_k^{(i)})\\
&\quad+\sum_{j=1}^{ \mathbf{K} }\pi_{kj}\Delta P_j^{(i+1)}+\Delta K_k^{(i)\top}(R_k+D_k^\top P_k^{(i)}D_k)\Delta K_k^{(i)}\\
&\quad+\Delta K_k^{(i)\top}\big[B_k^\top P_k^{(i)}+D_k^\top P_k^{(i)}C_k+S_k+(R_k+D_k^\top P_k^{(i)}D_k)K_k^{(i)}\big]\\
&\quad+\big[B_k^\top P_k^{(i)}+D_k^\top P_k^{(i)}C_k+S_k+(R_k+D_k^\top P_k^{(i)}D_k)K_k^{(i)}\big]^\top \Delta K_k^{(i)}=0,~~~k\in \mathcal{M}.
\end{aligned}
\end{equation}
By \eqref{algorithm 1-improvement}, we have
\begin{align}\label{eq:RDPD}
-(R_k+D_k^\top P_k^{(i)}D_k)K_k^{(i)}=B_k^\top P_k^{(i)}+D_k^\top P_k^{(i)}C_k+S_k,~~~k\in \mathcal{M}.
\end{align}
Plugging \eqref{eq:RDPD} into \eqref{eq:D}, one gets
\begin{equation}\label{Delta}
\begin{aligned}
&(A_k+B_kK_k^{(i)})^\top\Delta P_k^{(i+1)}+\Delta P_k^{(i+1)}(A_k+B_kK_k^{(i)})+(C_k+D_kK_k^{(i)})^\top \Delta P_k^{(i+1)}(C_k+D_kK_k^{(i)})\\
&~~~~~~~~\qquad\qquad+\sum_{j=1}^{ \mathbf{K} }\pi_{kj}\Delta P_j^{(i+1)}+\Delta K_k^{(i)\top}(R_k+D_k^\top P_k^{(i)}D_k)\Delta K_k^{(i)}=0,~~~k\in \mathcal{M}.
\end{aligned}
\end{equation}
Since $(K_1^{(i)},K_2^{(i)},\cdots,K_{ \mathbf{K} }^{(i)})$ is a stabilizer of system \eqref{system} and 
$$\Delta K_k^{(i)\top}(R_k+D_k^\top P_k^{(i)}D_k)\Delta K_k^{(i)}\geq 0, ~~~k\in \mathcal{M},$$ 
from Lemma \ref{lemma-stabilizer}, Lyapunov equation \eqref{Delta} admits a unique solution $\Delta P_k^{(i+1)}\geq 0$. 
Then, $\{(P_1^{(i)},P_2^{(i)},\cdots,P_{ \mathbf{K} }^{(i)})\}_{i=1}^{\infty}$ is monotonically nonincreasing. 
Since $P_k^{(i)}> 0$, and $P_k^{(i)}$ has a low bound, so $\{(P_1^{(i)},P_2^{(i)},\cdots,$ $P_{ \mathbf{K} }^{(i)})\}_{i=1}^{\infty}$ converges to $(P_1,P_2,\cdots,P_{ \mathbf{K} })\in(\mathcal S^n_{+})^{ \mathbf{K} }$. 

Next, we prove that $P_k$ is the solution of CGAREs \eqref{SARE}.
When $i\rightarrow\infty$, as $P_k^{(i)}\rightarrow P_k$, we have
$\{K_k^{(i)}\}_{i=1}^{\infty}$ converges to 
\begin{equation}\label{eq:Km}
K_k=-(R_k+D_k^\top P_k D_k)^{-1}(B_k^\top P_k+D_k^\top P_kC_k+S_k ),~~~k\in \mathcal{M}.
\end{equation}
Moreover, $(P_1,P_2,\cdots,P_{ \mathbf{K} })$ satisfies 
\begin{equation}\label{Lyapunov K}
\begin{aligned}
&(A_k+B_kK_k)^\top P_k+P_k(A_k+B_kK_k)+(C_k+D_kK_k)^\top P_k(C_k+D_kK_k)\\
&~~\qquad\qquad\qquad+\sum_{j=1}^{ \mathbf{K} }\pi_{kj}P_j+K_k^\top R_kK_k+S_k^\top K_k+K_k^\top S_k+N_k=0,~~~k\in \mathcal{M}.
\end{aligned}
\end{equation}
Since 
$K_k^\top R_kK_k+S_k^\top K_k+K_k^\top S_k+N_k> 0$,
one gets
\[
(A_k+B_kK_k)^\top P_k+P_k(A_k+B_kK_k)+(C_k+D_kK_k)^\top P_k(C_k+D_kK_k)+\sum_{j=1}^{ \mathbf{K} }\pi_{kj}P_j<0,~~~k\in \mathcal{M}.
\]
By Lemma \ref{lemma-stabilizer}, $(K_1,K_2,\cdots,K_{ \mathbf{K} })$ is a stabilizer of system \eqref{system} and \eqref{Lyapunov K} admits a unique solution $(P_1,P_2,\cdots,$
$P_{ \mathbf{K} })\in(\mathcal S^n_{++})^{ \mathbf{K} }$. Moreover, plugging \eqref{eq:Km} into \eqref{Lyapunov K}, \eqref{Lyapunov K} becomes CGAREs \eqref{SARE}.  
We complete the proof. 
\end{proof}

Although Lyapunov Iteration Scheme provides a constructive route to the optimal solution and establishes both stability and convergence, 
it has two main shortcomings in the implementation. First, it requires all of the system parameters.
Second, it is difficult to solve $P_k^{(i+1)}~(k \in \mathcal{M})$ from Lyapunov Recursion \eqref{Lyapunov Recursion}, because $P_1^{(i+1)},P_2^{(i+1)},\cdots,P_{\mathbf{K}}^{(i+1)}$ are fully coupled.
That motivates the development of model-free methods to solve the coupled problem. 
In the following two subsections, we propose two Q-learning algorithms. 

\subsection{On-Policy Q-Learning Algorithm for Problem (LQ-RS)}\label{on-policy}
~~~~~In this subsection, we propose an on-policy Q-learning algorithm which solves $P_k^{(i+1)}~(k \in \mathcal{M})$ without any system's information.

Now, we define a Q function as
	 \begin{equation}\label{eq:Q-function} 
	 \begin{aligned}
Q_k \left( x, u \right) 
&= \begin{bmatrix}
x \\
u
\end{bmatrix}^\top 
\mathbf{Q}_k 
\begin{bmatrix}
x \\
u
\end{bmatrix}
= \begin{bmatrix}
x \\
u
\end{bmatrix}^\top 
\begin{bmatrix}
Q_{kxx} & Q_{kxu} \\
Q_{kux} & Q_{kuu}
\end{bmatrix}
\begin{bmatrix}
x \\
u
\end{bmatrix} \\
&= x^\top \Pi(\mathbf{Q}_k)x + (u - \Gamma(\mathbf{Q}_k)x)^\top (R_k + D_k^\top P_kD_k)(u - \Gamma(\mathbf{Q}_k)x),~~k\in \mathcal{M},
\end{aligned}
\end{equation}
where $\Pi(\mathbf{Q}_k) = Q_{kxx} - Q_{kxu}(Q_{kuu})^{-1}Q_{kux}$ and $\Gamma(\mathbf{Q}_k) = -(Q_{kuu})^{-1}Q_{kux}$ with
\begin{align}
Q_{kxx}:=Q_{xx}(P_k)&=P_k + A_k^\top P_k + P_kA_k + C_k^\top P_kC_k + \sum_{j=1}^{ \mathbf{K} }\pi_{kj}P_j+N_k,\label{eq:Qxx}\\
Q_{kxu}:=Q_{xu}(P_k)&=P_kB_k + C_k^\top P_kD_k + S_k^\top, \label{eq:Qxu}\\
Q_{kuu}:=Q_{uu}(P_k)&=R_k + D_k^\top P_kD_k, \label{eq:Quu}\\ 
Q_{kux}:=Q_{ux}(P_k)&=B_k^\top P_k + D_k^\top P_kC_k + S_k.\nonumber
\end{align} 
For simplicity, we denote $\Theta(P_k^{(i)})=\Theta_k^{(i)}$, where $\Theta=Q_{xx}, Q_{xu}, Q_{ux}, Q_{uu}$ and $\Theta_k^{(i)}=Q_{kxx}^{(i)}, Q_{kxu}^{(i)}$, $Q_{kux}^{(i)}, Q_{kuu}^{(i)}$. 
If $P_k~(k\in \mathcal{M})$ is the solution of CGAREs \eqref{SARE} and $u_k=\Gamma(\mathbf{Q}_k)x$ is the optimal solution for Problem (LQ-RS), we have
\begin{equation}\label{eq:value function}
Q_k(x, u_k) = x^\top P_k x,~~~~~k\in \mathcal{M}.
\end{equation}
Then, value function is rewritten as
\begin{equation}\label{eq:new bellman equation}
\begin{aligned}
Q_k(x, u_k) =&~ \mathbb{E} \bigg\{ \int_t^{\infty} X(s)^\top \Big[ N(\alpha_s) + 2\Gamma(\mathbf{Q}(\alpha_s))^\top S(\alpha_s) \\
&\qquad~+ \Gamma(\mathbf{Q}(\alpha_s))^\top R(\alpha_s)\Gamma(\mathbf{Q}(\alpha_s)) \Big] X(s)ds\;\Big|\;X(t)=x,\alpha_t=k\bigg\},
\end{aligned} 
\end{equation}
where $\Gamma(\mathbf{Q}(\alpha_s))=\Gamma(\mathbf{Q}_k)$ when $\alpha_s=k~(k\in \mathcal{M})$.

Based on \eqref{eq:new bellman equation}, we formulate the following on-policy Q-learning algorithm to solve optimal control.
	\begin{algorithm}[H]
\caption{On-policy Q-learning algorithm for Problem (LQ-RS)}
\label{alg:2}
\begin{algorithmic}[1] 
\State \textbf{Initialization:} Select any stabilizer $(\Gamma(\mathbf{Q}_1^{(0)}), \Gamma(\mathbf{Q}_2^{(0)}), \cdots, \Gamma(\mathbf{Q}_{ \mathbf{K} }^{(0)}))$ for system \eqref{system}.
\State Let $i = 0$ and $\varepsilon > 0$.
\State \textbf{do} \{
\State Running system \eqref{system} with $u^{(i)}(s) = \sum_{k=1}^ \mathbf{K} \Gamma(\mathbf{Q}_k^{(i)}) X^{(i)}(s)\chi_{\{\alpha_s=k\}}(s)$ on $[t, \infty)$.
\State \textbf{Policy Evaluation} (Reinforcement): Solve $\mathbf{Q}^{(i+1)}_k$ $(k\in \mathcal{M})$ from the identity
\begin{equation}\label{eq:q-learning onPolicy Evaluation}
\begin{aligned}
\begin{bmatrix}
x \\
u^{(i)}_k 
\end{bmatrix}^\top 
&\mathbf{Q}^{(i+1)}_k 
\begin{bmatrix}
x \\
u^{(i)}_k
\end{bmatrix} 
	= \mathbb{E} \bigg\{ \int_t^\infty X^{(i)}(s)^\top \Big[ N(\alpha_s) + 2\Gamma(\mathbf{Q}^{(i)}(\alpha_s))^\top S(\alpha_s) \\
& + \Gamma(\mathbf{Q}^{(i)}(\alpha_s))^\top R(\alpha_s)\Gamma(\mathbf{Q}^{(i)}(\alpha_s)) \Big] X^{(i)}(s)ds \;\bigg|\; X^{(i)}(t) = x, \alpha_t = k \bigg\},~k\in \mathcal{M}.
\end{aligned} 
\end{equation} 
\State \textbf{Policy Improvement} (Update): Update $\Gamma(\mathbf{Q}_k^{(i+1)})$ by
\begin{equation}\label{eq:q-learning Policy Improvement}
\Gamma(\mathbf{Q}_k^{(i+1)}) = -(Q^{(i+1)}_{kuu})^{-1} (Q^{(i+1)}_{kux}),~~~~~~~k\in \mathcal{M}. 
\end{equation} 
\State $i \leftarrow i + 1$.
\State \} \textbf{until} $\|\mathbf{Q}^{(i+1)}_k - \mathbf{Q}^{(i)}_k\| < \varepsilon$ for all $k\in \mathcal{M}$.
\end{algorithmic}
\end{algorithm}

By virtue of the introduction of Q-function, Algorithm \ref{alg:2} does not require the knowledge of the system dynamics, which means that it is a totally model-free algorithm. In addition, Algorithm \ref{alg:2} overcomes the difficulty of
the randomness of $P_k^{(i+1)}~(k\in \mathbf{M})$ at the future time $t+\Delta t$ and decouples $\{P^{(i+1)}_k\}_{k=1}^{\mathbf{K}}$
by system stability properties over the entire time interval $[t,+\infty)$.

\begin{lemma}\label{lemma-1}
Algorithm \ref{alg:2} is equivalent to Lyapunov Iteration Scheme \eqref{Lyapunov Recursion}-\eqref{algorithm 1-improvement}.
\end{lemma}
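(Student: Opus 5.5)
We will show that the two iterations generate the same sequences: if both start from the same stabilizer, $K_k^{(0)}=\Gamma(\mathbf{Q}_k^{(0)})$, then by induction on $i$ we get $K_k^{(i)}=\Gamma(\mathbf{Q}_k^{(i)})$ and $\mathbf{Q}_k^{(i+1)}=\mathbf{Q}_k(P_k^{(i+1)})$ for all $k\in\mathcal{M}$. Here $P_k^{(i+1)}$ is the solution of Lyapunov Recursion \eqref{Lyapunov Recursion}, and $\mathbf{Q}_k(P)$ is the block matrix built from \eqref{eq:Qxx}--\eqref{eq:Quu} with $P$ in place of $P_k$. The policy improvement step is then immediate. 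Indeed, $\Gamma(\mathbf{Q}_k^{(i+1)})=-(Q_{kuu}^{(i+1)})^{-1}Q^{(i+1)}_{kux}=-(R_k+D_k^\top P_k^{(i+1)}D_k)^{-1}(B_k^\top P_k^{(i+1)}+D_k^\top P_k^{(i+1)}C_k+S_k)$, which is exactly \eqref{algorithm 1-improvement}. The substance of the proof therefore lies in the policy evaluation step.

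\emph{Lyapunov $\Rightarrow$ Q-learning.} Fix $i$ and suppose $K_k^{(i)}=\Gamma(\mathbf{Q}_k^{(i)})$ is a stabilizer, which holds by Theorem \ref{theorem 1}. Let $P_k^{(i+1)}$ solve \eqref{Lyapunov Recursion}. Apply Lemma \ref{lem:Ito} to $\varphi(X,k)=X^\top P_k^{(i+1)}X$ along the closed-loop trajectory $X^{(i)}$ on $[t,T]$. The generator $\mathcal{L}\varphi$ equals $X^\top[\,\cdots+\sum_j\pi_{kj}P_j^{(i+1)}]X$, and by \eqref{Lyapunov Recursion} this is $-X^\top[N_k+2K_k^{(i)\top}S_k+K_k^{(i)\top}R_kK_k^{(i)}]X$. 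Taking expectations (the stochastic integral is a martingale since $X^{(i)}\in L^2_{\mathbb F}$) and letting $T\to\infty$ gives
\[
x^\top P_k^{(i+1)}x=\mathbb{E}\Big\{\int_t^\infty X^{(i)\top}\big[N+2K^{(i)\top}S+K^{(i)\top}RK^{(i)}\big]X^{(i)}ds\,\Big|\,x,k\Big\},
\]
where mean-square stability gives $\mathbb{E}[X^{(i)}(T)^\top P X^{(i)}(T)]\to0$. Next we check that $\mathbf{Q}_k(P^{(i+1)})$ satisfies \eqref{eq:q-learning onPolicy Evaluation}. Completing the square as in \eqref{eq:Q-function}, with $u_k^{(i)}=K_k^{(i)}x$, the quadratic form $[x;u_k^{(i)}]^\top\mathbf{Q}_k(P^{(i+1)})[x;u_k^{(i)}]$ expands to
\[
x^\top\big[P^{(i+1)}_k+(A_k+B_kK_k^{(i)})^\top P_k^{(i+1)}+\cdots+\textstyle\sum_j\pi_{kj}P_j^{(i+1)}+N_k+2K_k^{(i)\top}S_k+K_k^{(i)\top}R_kK_k^{(i)}\big]x .
\]
By \eqref{Lyapunov Recursion} this equals $x^\top P_k^{(i+1)}x$, which is the right-hand side above. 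So $\mathbf{Q}_k(P^{(i+1)})$ is a solution of the policy evaluation identity.

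\emph{Q-learning $\Rightarrow$ Lyapunov.} This is the main obstacle, because \eqref{eq:q-learning onPolicy Evaluation} is one scalar identity in $x$ and does not by itself pin down an arbitrary symmetric $\mathbf{Q}_k^{(i+1)}$. I would handle it by interpreting the unknown in the policy evaluation step, as the algorithm intends, as a Q-function of the form \eqref{eq:Qxx}--\eqref{eq:Quu} for some unknown $(P_1,\dots,P_{\mathbf{K}})$. The identity then reads $x^\top\Phi_k(P)x=V^{(i)}(x,k)$, where $\Phi_k(P)$ denotes the left-hand bracket above and $V^{(i)}$ is the cost of the policy $K^{(i)}$. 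We have just shown $V^{(i)}(x,k)=x^\top P_k^{(i+1)}x$. Since both sides are quadratic forms in $x$ valid for all $x\in\mathbb{R}^n$ and both matrices are symmetric, we get $\Phi_k(P)=P_k^{(i+1)}$ for every $k$. Subtracting $P_k$ from both sides shows that this is precisely the coupled Lyapunov system \eqref{Lyapunov Recursion} with unknown $P$. By Lemma \ref{lemma-stabilizer}, applied with the stabilizer $K^{(i)}$ and a positive definite forcing term, that system has a unique solution, so $P=P^{(i+1)}$. Hence $\mathbf{Q}_k^{(i+1)}=\mathbf{Q}_k(P^{(i+1)})$, and the induction closes; together with the policy improvement identity this gives the claimed equivalence.
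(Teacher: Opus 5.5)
Your forward direction (It\^o on $[t,T]$ plus mean-square stability, then expanding the quadratic form of $\mathbf{Q}_k(P^{(i+1)})$ at $u=K_k^{(i)}x$) and your policy-improvement step are the same as in the paper. The gap is in the converse.

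\textbf{The uniqueness step fails.} Write
$\mathcal{L}^{(i)}_k(E)=(A_k+B_kK_k^{(i)})^\top E_k+E_k(A_k+B_kK_k^{(i)})+(C_k+D_kK_k^{(i)})^\top E_k(C_k+D_kK_k^{(i)})+\sum_j\pi_{kj}E_j$
and $c^{(i)}_k=N_k+S_k^\top K_k^{(i)}+K_k^{(i)\top}S_k+K_k^{(i)\top}R_kK_k^{(i)}$, so that $\Phi_k(P)=P_k+\mathcal{L}^{(i)}_k(P)+c^{(i)}_k$. Subtracting $P_k$ from $\Phi_k(P)=P_k^{(i+1)}$ gives
\[
\mathcal{L}^{(i)}_k(P)+c^{(i)}_k=P_k^{(i+1)}-P_k ,\qquad k\in\mathcal{M}.
\]
This is \eqref{Lyapunov Recursion} only if $P_k=P_k^{(i+1)}$, which is the conclusion you want. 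Its solution set is $P^{(i+1)}+\ker(\mathcal{I}+\mathcal{L}^{(i)})$. Lemma \ref{lemma-stabilizer} gives nothing about $\mathcal{I}+\mathcal{L}^{(i)}$, which is the Lyapunov operator of the closed loop with drift shifted by $\tfrac12 I_n$ and need not be stabilizing. Mean-square stability places the spectrum of $\mathcal{L}^{(i)}$ in the open left half-plane, but it does not exclude the eigenvalue $-1$.

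This really happens. Take $n=r=1$, $A_k=C_k=D_k=0$, $B_k=1$, and the stabilizer $K_k^{(0)}=-\tfrac12$ for all $k$. Since the rows of $\Pi$ sum to zero, $E=(1,\dots,1)$ satisfies $\mathcal{L}^{(0)}(E)=-E$. Hence every $P^{(1)}+\epsilon E$ satisfies your structured identity, and it is still positive for small $|\epsilon|$. But $Q_{kux}=P_k+S_k$ shifts by $\epsilon$, so the updated gain depends on $\epsilon$. Thus ``$P=P^{(i+1)}$'' is not established, and neither is the induction step.

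\textbf{What is missing, and how the paper proceeds.} You need the consistency condition \eqref{eq:value function}: the $P$ inside the ansatz must be the value of the policy being evaluated, i.e.\ $[x;K_k^{(i)}x]^\top\mathbf{Q}_k(P)[x;K_k^{(i)}x]=x^\top P_kx$. With it, your identity gives $x^\top P_kx=V^{(i)}(x,k)=x^\top P_k^{(i+1)}x$, hence $P=P^{(i+1)}$ at once. Only then does subtracting $P_k$ genuinely produce \eqref{Lyapunov Recursion}.

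This is in effect the paper's route. It proves that \eqref{eq:xpx}, i.e.\ $x^\top P_kx=V^{(i)}(x,k)$, is equivalent to \eqref{Lyapunov Recursion}; the converse comes from It\^o's formula on $[t,t+\Delta t]$, dividing by $\Delta t$ and letting $\Delta t\to0$. It then obtains \eqref{eq:q-learning onPolicy Evaluation} by adding the quadratic form \eqref{Lyapunov x matrix} of the recursion.

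Your observation that \eqref{eq:q-learning onPolicy Evaluation} by itself underdetermines $\mathbf{Q}_k^{(i+1)}$ is correct. The paper's claim that it is equivalent to \eqref{eq:xpx} is asserted rather than argued. However, a structured ansatz without the value-consistency condition does not close that hole, unless you additionally assume $-1\notin\sigma(\mathcal{L}^{(i)})$.
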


\begin{proof}
We apply generalized It\^o's formula to $X^{(i)}(s)^\top P^{(i+1)}(\alpha_s)X^{(i)}(s)$ by Lemma \ref{lem:Ito}. Suppose 
$P^{(i+1)}(\alpha_s)$ is the solution of Lyapunov Recursion \eqref{Lyapunov Recursion}, and $X^{(i)}(s)$ is the state in \eqref{system}. 
We have
\begin{equation}\label{Ito-1}
\begin{aligned}
&\quad d\left[X^{(i)}(s)^\top P^{(i+1)}(\alpha_s)X^{(i)}(s)\right]\\
&=\bigg\{X^{(i)}(s)^\top\Big[\big(A(\alpha_s)+B(\alpha_s)K^{(i)}(\alpha_s)\big)^\top P^{(i+1)}(\alpha_s)+P^{(i+1)}(\alpha_s)\big(A(\alpha_s)+B(\alpha_s)K^{(i)}(\alpha_s)\big)\\
&\quad+\big(C(\alpha_s)+D(\alpha_s)K^{(i)}(\alpha_s)\big)^\top P^{(i+1)}(\alpha_s)\big(C(\alpha_s)+D(\alpha_s)K^{(i)}(\alpha_s)\big)\\
&\quad+\sum_{j=1}^{ \mathbf{K} }\pi_{\alpha_s j}P_j^{(i+1)}\Big]X^{(i)}(s)\bigg\}ds +\left\{...\right\}dW(s).
\end{aligned}
\end{equation}
Integrating \eqref{Ito-1} on $[t, \infty)$ and taking expectations on both sides, one gets
\begin{equation}\label{integrating}
\begin{aligned}
&\quad x^\top P_k^{(i+1)}x\\
&=-\mathbb E\bigg\{\int_t^{\infty}X^{(i)}(s)^\top\Big[\big(A(\alpha_s)+B(\alpha_s)K^{(i)}(\alpha_s)\big)^\top P^{(i+1)}(\alpha_s)+P^{(i+1)}(\alpha_s)\big(A(\alpha_s)+B(\alpha_s)K^{(i)}(\alpha_s)\big)\\
&\quad+\big(C(\alpha_s)+D(\alpha_s)K^{(i)}(\alpha_s)\big)^\top P^{(i+1)}(\alpha_s)\big(C(\alpha_s)+D(\alpha_s)K^{(i)}(\alpha_s)\big)\\
&\quad+\sum_{j=1}^{ \mathbf{K} }\pi_{\alpha_s j}P_j^{(i+1)}\Big]X^{(i)}(s)ds\Big|X^{(i)}(t)=x,\alpha_t=k\bigg\}\\
&=-\mathbb E\bigg\{\sum_{k=1}^ \mathbf{K} \int_t^{\infty}X^{(i)}(s)^\top\Big[\big(A_k+B_kK_k^{(i)}\big)^\top P_k^{(i+1)}+P_k^{(i+1)}\big(A_k+B_kK_k^{(i)}\big)+\sum_{j=1}^{ \mathbf{K} }\pi_{k j}P_j^{(i+1)}\\
&\quad+\big(C_k+D_kK_k^{(i)}\big)^\top P_k^{(i+1)}\big(C_k+D_kK_k^{(i)}\big)\Big]X^{(i)}(s)\chi_{\{\alpha_s=k\}}(s)ds\Big|X^{(i)}(t)=x,\alpha_t=k\bigg\}.\\
\end{aligned}
\end{equation}
Taking Lyapunov Recursion \eqref{Lyapunov Recursion} into \eqref{integrating} yields 
\begin{equation}
\begin{aligned}\label{eq:xpx}
x^\top P_k^{(i+1)}x
&=\mathbb E\bigg\{\int_t^{\infty}X^{(i)}(s)^\top\Big[N(\alpha_s)+2K^{(i)}(\alpha_s)^{\top} S(\alpha_s)\\
&\quad +K^{(i)}(\alpha_s)^{\top}R(\alpha_s)K^{(i)}(\alpha_s)\Big]X^{(i)}(s)ds\Big|X^{(i)}(t)=x,\alpha_t=k\bigg\},~~k\in \mathcal{M}.
\end{aligned}
\end{equation}
On the other hand, 
if $P_k^{(i+1)}\in\mathcal S^n_{++}$ $(k\in \mathcal{M})$ is the solution of \eqref{eq:xpx}, for any $t>0$, integrating \eqref{Ito-1} on $[t, t+\Delta t]$ and taking expectations on both sides, one gets
\begin{equation}
	\label{combining-1}
	\begin{aligned}
		&~~~\mathbb E\big[X^{(i)}(t+\Delta t)^\top P^{(i+1)}(\alpha_{t+\Delta t})X^{(i)}(t+\Delta t)\big|X^{(i)}(t)=x,\alpha_t=k\big]-x^\top P_k^{(i+1)}x\\
&=\mathbb E\Big\{\int_t^{t+\Delta t}X^{(i)} (s)^\top\Big[\big(A(\alpha_s)+B(\alpha_s)K^{(i)}(\alpha_s)\big)^\top P^{(i+1)}(\alpha_s)+P^{(i+1)}(\alpha_s)\big(A(\alpha_s)+B(\alpha_s)K^{(i)}(\alpha_s)\big)\\
&~~~+\big(C(\alpha_s)+D(\alpha_s)K^{(i)}(\alpha_s)\big)^\top P^{(i+1)}(\alpha_s)\big(C(\alpha_s)+D(\alpha_s)K^{(i)}(\alpha_s)\big)\\
&~~~+\sum_{j=1}^{ \mathbf{K} }\pi_{\alpha_s j}P_j^{(i+1)}\Big]X^{(i)}(s)ds\Big|X^{(i)}(t)=x,\alpha_t=k\Big\},\qquad\qquad\qquad\qquad~~~k\in \mathcal{M}.
	\end{aligned}
\end{equation} 
From \eqref{eq:xpx}, we obtain
\begin{equation}\label{combining-2}
	\begin{aligned}
x^\top P_k^{(i+1)}x &=\mathbb E\Big\{\int_t^{t+\Delta t}X^{(i)}(s)^\top\Big[N(\alpha_s)+2K^{(i)}(\alpha_s)^{\top} S(\alpha_s)\\
&~~~+K^{(i)}(\alpha_s)^{\top} R(\alpha_s)K^{(i)}(\alpha_s)\Big]X^{(i)}(s)ds\Big|X^{(i)}(t)=x,\alpha_t=k\Big\}\\
&~~~+\mathbb E\big[X^{(i)}(t+\Delta t)^\top P^{(i+1)}(\alpha_{t+\Delta t})X^{(i)}(t+\Delta t)\big|X^{(i)}(t)=x,\alpha_t=k\big],~~~k\in \mathcal{M}.
	\end{aligned}
\end{equation}
Combining \eqref{combining-1} and \eqref{combining-2}, we have, for $k\in\mathcal{M}$, 
\begin{equation}\label{combining}
	\begin{aligned}
		&\mathbb E\Big\{\int_t^{t+\Delta t}X^{(i)} (s)^\top\Big[\big(A(\alpha_s)+B(\alpha_s)K^{(i)}(\alpha_s)\big)^\top P^{(i+1)}(\alpha_s)+P^{(i+1)}(\alpha_s)\big(A(\alpha_s)+B(\alpha_s)K^{(i)}(\alpha_s)\big)\\
&~~~+\big(C(\alpha_s)+D(\alpha_s)K^{(i)}(\alpha_s)\big)^\top P^{(i+1)}(\alpha_s)\big(C(\alpha_s)+D(\alpha_s)K^{(i)}(\alpha_s)\big)+\sum_{j=1}^{ \mathbf{K} }\pi_{\alpha_s j}P_j^{(i+1)}\\
&~~~+N(\alpha_s)+2K^{(i)}(\alpha_s)^{\top} S(\alpha_s)+K^{(i)}(\alpha_s)^{\top} R(\alpha_s)K^{(i)}(\alpha_s)\Big]X^{(i)}(s)ds\Big|X^{(i)}(t)=x,\alpha_t=k\Big\}=0.\\ 
	\end{aligned}
\end{equation}
Dividing $\Delta t$ on both sides of \eqref{combining}, let $\Delta t\rightarrow 0$, one gets 
\begin{equation}\label{Lyapunov x}
\begin{aligned}
&x^\top\Big[(A_k+B_kK_k^{(i)})^\top P_k^{(i+1)}+P_k^{(i+1)}(A_k+B_kK_k^{(i)})+(C_k+D_kK_k^{(i)})^\top P_k^{(i+1)}(C_k+D_kK_k^{(i)})\\
&~~~~\qquad+\sum_{j=1}^{ \mathbf{K} }\pi_{k j}P_j^{(i+1)}+K_k^{(i)\top} R_kK_k^{(i)}+S_k^\top K_k^{(i)}+K_k^{(i)\top} S_k+N_k\Big]x=0,~~~k\in \mathcal{M},
\end{aligned}
\end{equation}
which holds true for any $x\in\mathbb R^n$, and $P_k^{(i+1)}\in\mathcal S^n_{++}~(k\in \mathcal{M})$ satisfies Lyapunov Recursion \eqref{Lyapunov Recursion}.
So, \eqref{eq:xpx} is equivalent to Lyapunov Recursion \eqref{Lyapunov Recursion}.

Rewriting \eqref{Lyapunov x} with $u_k^{(i)}=K_k^{(i)}x~(k\in \mathcal{M})$, yields
\begin{equation}\label{Lyapunov x matrix}
\begin{bmatrix}
x \\
u_k^{(i)}
\end{bmatrix}^\top
\begin{bmatrix}
Q_{kxx}^{(i+1)}-P^{(i+1)}_k& Q_{kxu}^{(i+1)} \\ 
Q_{kux}^{(i+1)} & Q_{kuu}^{(i+1)}
\end{bmatrix}
\begin{bmatrix}
x \\
u_k^{(i)}
\end{bmatrix}=0,~~k\in \mathcal{M}. 
\end{equation}
Adding \eqref{Lyapunov x matrix} to \eqref{eq:xpx}, we obtain
\begin{equation*}
\begin{aligned}
&\quad \begin{bmatrix}
x \\
u_k^{(i)}
\end{bmatrix}^\top
\begin{bmatrix}
Q_{kxx}^{(i+1)}-P^{(i+1)}_k& Q_{kxu}^{(i+1)} \\ 
Q_{kux}^{(i+1)} & Q_{kuu}^{(i+1)}
\end{bmatrix}
\begin{bmatrix}
x \\
u_k^{(i)}
\end{bmatrix}+
\begin{bmatrix}
x \\
u_k^{(i)}
\end{bmatrix}^\top
\begin{bmatrix}
P_k^{(i+1)} & 0 \\ 
0 & 0
\end{bmatrix}
\begin{bmatrix}
x \\
u_k^{(i)}
\end{bmatrix}\\
&= \mathbb{E} \bigg\{ \int_t^\infty X^{(i)}(s)^\top \Big[ N(\alpha_s) 
+ 2\Gamma(\mathbf{Q}^{(i)}(\alpha_s))^\top S(\alpha_s) \\
&\quad+ \Gamma(\mathbf{Q}^{(i)}(\alpha_s))^\top R(\alpha_s)\Gamma(\mathbf{Q}^{(i)}(\alpha_s)) \Big] X^{(i)}(s)ds \bigg| X^{(i)}(t) = x, \alpha_t = k \bigg\},~~k\in \mathcal{M}, 
\end{aligned}
\end{equation*}
which confirms \eqref{eq:q-learning onPolicy Evaluation}. Since \eqref{eq:q-learning onPolicy Evaluation} is equivalent to \eqref{eq:xpx}, and \eqref{eq:xpx} is equivalent to Lyapunov Recursion \eqref{Lyapunov Recursion}, 
then \eqref{eq:q-learning onPolicy Evaluation} is equivalent to Lyapunov Recursion \eqref{Lyapunov Recursion}.

Moreover, it is easy to verify that
\begin{equation}\label{algorithm2 K}
\begin{aligned}
\Gamma(\mathbf{Q}_k^{(i+1)})&=-(Q^{(i+1)}_{kuu})^{-1}{Q^{(i+1)}_{kux}}\\
&=-\Big(R_k + D_k^\top P_k^{(i+1)}D_k \Big)^{-1}\Big(B_k^\top P_k^{(i+1)} + D_k^\top P_k^{(i+1)}C_k + S_k\Big)\\
&=K_k^{(i+1)},~k\in \mathcal{M}.
\end{aligned}
\end{equation}
We complete the proof.
\end{proof}

Based on the equivalence, we show that Algorithm \ref{alg:2} also has stabilization and convergence properties.

\begin{theorem}
Assume Assumption \ref{ass1} holds. Let $(\Gamma(\mathbf{Q}_1^{(0)}),\Gamma(\mathbf{Q}_2^{(0)}),...,\Gamma(\mathbf{Q}_{ \mathbf{K} }^{(0)}))$ 
be the known initial stabilizer for system \eqref{system}, then all the control gains $\{\Gamma(\mathbf{Q}_k^{(i)})\}_{i=1}^\infty~(k\in \mathcal{M})$ updated by \eqref{eq:q-learning Policy Improvement} are stabilizers. 
Moreover, $\{\mathbf{Q}_k^{(i+1)}\}_{i=0}^{\infty}~(k\in \mathcal{M})$ in \eqref{eq:q-learning onPolicy Evaluation} converges to $\mathbf{Q}_k~(k\in \mathcal{M})$ and $\{\Gamma(\mathbf{Q}_k^{(i)})\}_{i=1}^\infty~(k\in \mathcal{M})$ converges to $\Gamma(\mathbf{Q}_k)~(k\in \mathcal{M})$.
The optimal control is $u^*=\sum_{k=1}^{ \mathbf{K} }\Gamma(\mathbf{Q}_k)X^*$ $\times \chi_{\{\alpha_s=k\}}(s)$, where $X^*$ is the corresponding optimal state with $u^*$ of system \eqref{system}.
\end{theorem}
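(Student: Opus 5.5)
The plan is to transfer everything from the Lyapunov Iteration Scheme to Algorithm \ref{alg:2} through the equivalence in Lemma \ref{lemma-1}, and then invoke Theorems \ref{theorem 1} and \ref{theorem 2}. I would proceed by induction on $i$.

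\textbf{Induction setup.} Set $K_k^{(0)}:=\Gamma(\mathbf{Q}_k^{(0)})$, which by hypothesis is a stabilizer. Suppose $\Gamma(\mathbf{Q}_k^{(i)})=K_k^{(i)}$ for all $k\in\mathcal{M}$, where $K_k^{(i)}$ is generated by the Lyapunov Iteration Scheme from the same initial stabilizer. Then:
\begin{itemize}
\item By Lemma \ref{lemma-1}, the policy-evaluation identity \eqref{eq:q-learning onPolicy Evaluation} is equivalent to Lyapunov Recursion \eqref{Lyapunov Recursion}.
\item Hence $\mathbf{Q}_k^{(i+1)}$ is the matrix built from $P_k^{(i+1)}$ through \eqref{eq:Qxx}--\eqref{eq:Quu}.
\item By \eqref{algorithm2 K}, $\Gamma(\mathbf{Q}_k^{(i+1)})=K_k^{(i+1)}$.
\end{itemize}
Theorem \ref{theorem 1} then gives that each $K_k^{(i+1)}$ is a stabilizer and that $P_k^{(i+1)}\in\mathcal S^n_{++}$ is unique. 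This closes the induction and proves the first claim.

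\textbf{Uniqueness of $\mathbf{Q}_k^{(i+1)}$.} The identity \eqref{eq:q-learning onPolicy Evaluation} is only evaluated along $u=K_k^{(i)}x$. So I would define $\mathbf{Q}^{(i+1)}_k$ as the structured matrix determined by $P^{(i+1)}_k$, rather than as an arbitrary symmetric matrix satisfying the identity. This is exactly how the equivalence in Lemma \ref{lemma-1} was set up. The uniqueness of $P^{(i+1)}_k$ then fixes $\mathbf{Q}^{(i+1)}_k$.

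\textbf{Convergence.} By Theorem \ref{theorem 2}, $P_k^{(i)}\to P_k$, where $(P_1,\dots,P_{\mathbf K})\in(\mathcal S^n_{++})^{\mathbf K}$ solves the CGAREs \eqref{SARE}.
\begin{itemize}
\item The maps $P\mapsto Q_{xx}(P),Q_{xu}(P),Q_{uu}(P)$ in \eqref{eq:Qxx}--\eqref{eq:Quu} are affine in $(P_1,\dots,P_{\mathbf K})$, hence continuous. Therefore $\mathbf{Q}_k^{(i+1)}\to\mathbf{Q}_k$, the Q-matrix associated with $P_k$.
\item $Q_{kuu}=R_k+D_k^\top P_kD_k\ge R_k>0$ is invertible, and inversion is continuous on positive definite matrices. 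Therefore $\Gamma(\mathbf{Q}_k^{(i)})=-(Q_{kuu}^{(i)})^{-1}Q_{kux}^{(i)}\to-(Q_{kuu})^{-1}Q_{kux}=\Gamma(\mathbf{Q}_k)$.
\item The limit $\Gamma(\mathbf{Q}_k)$ equals the gain $K_k$ in \eqref{eq:Km}, which the proof of Theorem \ref{theorem 2} showed is a stabilizer.
\end{itemize}

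\textbf{Optimality.} Since $(P_k)$ is a positive definite solution of \eqref{SARE}, the lemma on optimal control via the CGAREs, stated before Section \ref{sec.3}, shows that $u^*=\sum_k K_kX^*\chi_{\{\alpha_s=k\}}$ is optimal. As $K_k=\Gamma(\mathbf{Q}_k)$, this gives the stated optimal control, and $Q_k(x,u_k)=x^\top P_kx$ is consistent with \eqref{eq:value function}.

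The main obstacle I expect is the well-definedness of $\mathbf{Q}^{(i+1)}_k$ as the solution of \eqref{eq:q-learning onPolicy Evaluation} discussed above, since the identity alone does not determine a general symmetric matrix. The rest is bookkeeping on top of Theorems \ref{theorem 1} and \ref{theorem 2}.
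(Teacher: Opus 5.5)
Your proposal is correct and takes the same route as the paper, whose proof consists only of combining Theorems \ref{theorem 1} and \ref{theorem 2} with the equivalence in Lemma \ref{lemma-1}. Your induction identifying $\Gamma(\mathbf{Q}_k^{(i)})$ with $K_k^{(i)}$, the continuity argument for passing to the limit, and the appeal to the CGARE lemma for optimality spell out what that one-line proof leaves unstated. The same holds for your explicit convention that $\mathbf{Q}_k^{(i+1)}$ is the structured matrix built from $P_k^{(i+1)}$, which the paper uses only implicitly.
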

\begin{proof}
From Theorems \ref{theorem 1} and \ref{theorem 2}, combining Lemma \ref{lemma-1}, we obtain the conclusion.
\end{proof}

Next, we show how to implement Algorithm \ref{alg:2}.
To solve the parameters of $\mathbf{Q}^{(i+1)}_k$ $(k\in \mathcal{M})$ in~\eqref{eq:q-learning onPolicy Evaluation} and $\Gamma(\mathbf{Q}^{(i+1)}_k)~(k\in \mathcal{M})$ in~\eqref{eq:q-learning Policy Improvement}, 
from \cite{Li-Li-Xu 2025}, we randomly choose \(\mathbf{H}\) initial states \(x_h\in\mathbb{R}^{n}\) and $ \mathbf{K} $ initial states \(k\in\mathcal{M}\) 
to generate the corresponding state trajectories \(X_{h,k}^{(i)}(s)=X^{(i)}(s;t,x_h,k)\) over the horizon \([t,\infty)\) with \(h = 1,2,\ldots,\mathbf{H}\).
In addition, we calculate the conditional expectation of \(\mathbf{N}\) sample paths \(X_{h,k,\nu}^{(i)}\) \((\nu=1,2,...,\mathbf{N})\) 
under the known initial $(t,x_h,k)$. Moreover, we collect trajectory samples at times \(t_l\) (\(t_l \geq 0\)) and adopt a step size $\delta_{t_l}=t_{l+1}-t_{l}$, where \(l = 1,2,\ldots,\mathbf{L}\), and \(\mathbf{L}\) is sufficiently large.
	We denote
	\begin{equation*}
L^{(i)}_{xu,k} = \begin{bmatrix}
\bar{x}_1 & 2x_1^\top \otimes {u^{(i)}_{1,k}}^\top & \bar{u}^{(i)}_{1,k} \\
\bar{x}_2 & 2x_2^\top \otimes {u^{(i)}_{2,k}}^\top & \bar{u}^{(i)}_{2,k} \\
\vdots&\vdots& \vdots\\
\bar{x}_{\mathbf{H}} & 2x_{\mathbf{H}}^\top \otimes {u^{(i)}_{\mathbf{H},k}}^\top & \bar{u}^{(i)}_{\mathbf{H},k}
\end{bmatrix}, ~~k\in \mathcal{M},
\end{equation*}
where $u_{h,k}^{(i)}=\Gamma(\mathbf{Q}_k^{(i)})x_h+\omega$ and $\omega$ is a persistent excitation signal. We denote 
\begin{equation*}
	\Psi^{(i)}_{k}= \begin{bmatrix}
	\frac{1}{\mathbf{N}} \sum_{\nu=1}^{\mathbf{N}} \left[ \sum_{l=1}^{\mathbf{L}} {(X^{(i)}_{1,k,\nu}(t_l))}^\top \Sigma^{(i)}(\alpha_{t_l}) X^{(i)}_{1,k,\nu}(t_l) \delta_{t_l} \right] \\
\frac{1}{\mathbf{N}} \sum_{\nu=1}^{\mathbf{N}} \left[ \sum_{l=1}^{\mathbf{L}} {(X^{(i)}_{2,k,\nu}(t_l))}^\top \Sigma^{(i)}(\alpha_{t_l}) X^{(i)}_{2,k,\nu}(t_l) \delta_{t_l} \right] \\
\vdots \\
\frac{1}{\mathbf{N}} \sum_{\nu=1}^{\mathbf{N}} \left[ \sum_{l=1}^{\mathbf{L}} {(X^{(i)}_{\mathbf{H},k,\nu}(t_l))}^\top \Sigma^{(i)}(\alpha_{t_l}) X^{(i)}_{\mathbf{H},k,\nu}(t_l) \delta_{t_l} \right]
	\end{bmatrix}, ~~k\in \mathcal{M},
	\end{equation*}
	where $\Sigma^{(i)}(\alpha_{t_l})=N(\alpha_{t_l})+2K^{(i)}(\alpha_{t_l})^{\top} S(\alpha_{t_l})+{(K^{(i)}(\alpha_{t_l}))}^\top R(\alpha_{t_l})K^{(i)}(\alpha_{t_l})$. 
	We note that the number of initial states $\mathbf{H}$ must satisfy $\mathbf{H} \geq \frac{1}{2} [n(n+1) + 2nr + r(r+1)]$ to solve $Q_{kxx}^{(i+1)}$, $Q_{kux}^{(i+1)}$ and $Q_{kuu}^{(i+1)}$ with $k\in \mathcal{M}$. 

Algorithm \ref{alg:2} is required to satisfy a persistent excitation (PE) condition (Bradtke et al.\cite{S. J. Bradtke}, Al-Tamimi et al.\cite{A. Al-Tamimi}), which is exactly the following assumption.
The PE signal $\omega$ is introduced in $u_{h,k}^{(i)}=\Gamma(\mathbf{Q}_k^{(i)})x_h+\omega$ to guarantee that $L^{(i)}_{xu,k}$ has full column rank, which ensures that $\mathbf{Q}^{(i+1)}_k$ $(k\in \mathcal{M})$ can be uniquely solved. 

\begin{assumption}\label{as:3.1}
In order to guarantee that Algorithm \ref{alg:2} can be implemented online at each step, $L^{(i)}_{xu,k}$ is required to have full column rank, we assume
	there exists $\mathbf{H}_0>0$ such that for all $\mathbf{H} \geq \mathbf{H}_0$, $\mathrm{rank}(L^{(i)}_{xu,k})=\frac{1}{2} [n(n+1) + 2nr + r(r+1)]$. 
\end{assumption}

	Under Assumption \ref{as:3.1}, using least-squares method, we have
\begin{equation}\label{eq:algo2-Kronecker}
	\begin{bmatrix}
vech(Q^{(i+1)}_{kxx}) \\ vec(Q^{(i+1)}_{kxu}) \\ vech(Q^{(i+1)}_{kuu}) 
\end{bmatrix}
= \left( {L^{(i)}_{xu,k}}^\top L^{(i)}_{xu,k} \right)^{-1} {L^{(i)}_{xu,k}}^\top \Psi^{(i)}_{k},~~~k\in \mathcal{M}. 
	\end{equation}
So, \eqref{eq:q-learning onPolicy Evaluation} can be solved by \eqref{eq:algo2-Kronecker}. We also can obtain \eqref{eq:q-learning Policy Improvement}.

\subsection{Off-Policy Q-Learning Algorithm for Problem (LQ-RS)}\label{sec.3.2}
~~~~In Subsection \ref{on-policy}, we introduce the on-policy Q-learning algorithm for Problem (LQ-RS), which generates a new trajectory data pair $(X^{(i)}(s), u^{(i)}(s))$ for each iteration, 
resulting in low data utilization efficiency and increased running time. 
We now present an off-policy Q-learning algorithm for Problem (LQ-RS), 
which reuses trajectory data to improve data utilization efficiency and reduce running time.

We consider using off-policy trajectory data pair $(X(s), u(s))$ to calculate $\mathbf{Q}^{(i+1)}_k$ and $\Gamma(\mathbf{Q}_k^{(i+1)})$ for $k\in \mathcal{M}$. 
We demonstrate the following off-policy Q-learning algorithm for Problem (LQ-RS).

\begin{algorithm}[H]
\caption{Off-policy Q-learning algorithm for Problem (LQ-RS)}\label{alg:3}
\begin{algorithmic}[1] 
\State \textbf{Initialization:} Select any stabilizer $(\Gamma(\mathbf{Q}_1^{(0)}), \Gamma(\mathbf{Q}_2^{(0)}), \cdots, \Gamma(\mathbf{Q}_{ \mathbf{K} }^{(0)}))$ for system \eqref{system}.
Collect stabilizing control input trajectory $u(s)$ and obtain state trajectory $X(s)$ by running system \eqref{system} on $[t, \infty)$.
\State Let $i = 0$ and $\varepsilon > 0$.
\State \textbf{do} \{
\State \textbf{Policy Evaluation} (Reinforcement): Solve $\mathbf{Q}^{(i+1)}_k$ $(k\in \mathcal{M})$ from the identity
\begin{align}\label{eq:q-learning offPolicy Evaluation}
&\begin{bmatrix}
x \\
u_k^{(i)}
\end{bmatrix}^\top 
\mathbf{Q}^{(i+1)}_k 
\begin{bmatrix}
x \\
u_k^{(i)}
\end{bmatrix}
+\mathbb{E} \bigg\{ \int_t^\infty 2\Big(u(s)-\Gamma(\mathbf{Q}^{(i)}(\alpha_s))X(s)\Big)^\top Q_{ux}^{(i+1)}(\alpha_s)X(s) \nonumber\\
&\quad+\Big(u(s) - \Gamma(\mathbf{Q}^{(i)}(\alpha_s)) X(s)\Big)^\top Q^{(i+1)}_{uu}(\alpha_s)\Big(u(s) + \Gamma(\mathbf{Q}^{(i)}(\alpha_s)) X(s)\Big)ds \bigg| X(t) = x, \alpha_t = k \bigg\} \nonumber\\
	&= \mathbb{E} \bigg\{ \int_t^\infty 2u(s)^\top S(\alpha_s)X(s)+u(s)^\top R(\alpha_s)u(s)+X(s)^\top N(\alpha_s)X(s)ds \bigg| X(t) = x, \alpha_t = k \bigg\}, \nonumber\\
\end{align} 
where $Q_{ux}^{(i+1)}(\alpha_s)=Q_{kux}^{(i+1)}$, $Q_{uu}^{(i+1)}(\alpha_s)=Q_{kuu}^{(i+1)}$ when $\alpha_s=k~(k\in \mathcal{M})$.
\State \textbf{Policy Improvement} (Update): Update $\Gamma(\mathbf{Q}_k^{(i+1)})$ with formula \eqref{eq:q-learning Policy Improvement}.

\State $i \leftarrow i + 1$.
\State \} \textbf{until} $\|\mathbf{Q}^{(i+1)}_k- \mathbf{Q}^{(i)}_k\| < \varepsilon$ for all $k\in \mathcal{M}$.
\end{algorithmic}
\end{algorithm}

\begin{theorem}
Assume Assumption \ref{ass1} holds. Let $(\Gamma(\mathbf{Q}_1^{(0)}),\Gamma(\mathbf{Q}_2^{(0)}),...,\Gamma(\mathbf{Q}_{ \mathbf{K} }^{(0)}))$ 
be the known initial stabilizer for system \eqref{system}, then all the control gains $\{\Gamma(\mathbf{Q}_k^{(i)})\}_{i=1}^\infty~(k\in \mathcal{M})$ updated by \eqref{eq:q-learning Policy Improvement} are stabilizers. 
Moreover, $\{\mathbf{Q}_k^{(i+1)}\}_{i=0}^{\infty}~(k\in \mathcal{M})$ in \eqref{eq:q-learning offPolicy Evaluation} converges to $\mathbf{Q}_k~(k\in \mathcal{M})$ and $\{\Gamma(\mathbf{Q}_k^{(i)})\}_{i=1}^\infty~(k\in \mathcal{M})$ converges to $\Gamma(\mathbf{Q}_k)~(k\in \mathcal{M})$.
The optimal control is $u^*=\sum_{k=1}^{ \mathbf{K} }\Gamma(\mathbf{Q}_k)X^*$ $\times \chi_{\{\alpha_s=k\}}(s)$, where $X^*$ is the corresponding optimal state with $u^*$ of system \eqref{system}.
\end{theorem}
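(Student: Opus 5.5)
The plan is to reduce the off-policy algorithm to the on-policy one, and hence to Lyapunov Iteration Scheme. Concretely, I would show that the off-policy evaluation identity \eqref{eq:q-learning offPolicy Evaluation} holds if and only if \eqref{eq:xpx} holds, i.e.\ if and only if $P_k^{(i+1)}$ solves Lyapunov Recursion \eqref{Lyapunov Recursion}. The policy improvement step \eqref{eq:q-learning Policy Improvement} is shared with Algorithm \ref{alg:2}, so \eqref{algorithm2 K} again gives $\Gamma(\mathbf{Q}_k^{(i+1)})=K_k^{(i+1)}$. Once this equivalence is in hand, stability of every gain and convergence of $P_k^{(i)}$ to the CGARE solution follow from Theorems \ref{theorem 1} and \ref{theorem 2}. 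Convergence of $\mathbf{Q}_k^{(i+1)}$ and $\Gamma(\mathbf{Q}_k^{(i)})$ then follows because both depend continuously on $P_k^{(i+1)}$ through \eqref{eq:Qxx}--\eqref{eq:Quu}. Optimality of $u^*$ is given by the CGARE lemma.

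For the key step, I write the behavior dynamics, driven by the arbitrary stabilizing input $u(s)$, in the form
\[dX=[(A+BK^{(i)})X+B(u-K^{(i)}X)]ds+[(C+DK^{(i)})X+D(u-K^{(i)}X)]dW.\]
I then apply Lemma \ref{lem:Ito} to $X(s)^\top P^{(i+1)}(\alpha_s)X(s)$ on $[t,\infty)$ and take conditional expectations. Mean-square stability of $u$ kills the terminal term, and square integrability kills the martingale term. Next I substitute the Lyapunov Recursion \eqref{Lyapunov Recursion} into the drift. The remaining terms that involve $v:=u-K^{(i)}X$ are
\[2v^\top(B^\top P^{(i+1)}+D^\top P^{(i+1)}C)X+2v^\top D^\top P^{(i+1)}DK^{(i)}X+v^\top D^\top P^{(i+1)}Dv.\]
I rewrite these using $Q_{ux}^{(i+1)}=B^\top P^{(i+1)}+D^\top P^{(i+1)}C+S$ and $Q_{uu}^{(i+1)}=R+D^\top P^{(i+1)}D$, which gives
\[2v^\top Q_{ux}^{(i+1)}X+v^\top Q_{uu}^{(i+1)}(u+K^{(i)}X)-2v^\top SX-v^\top R(u+K^{(i)}X).\]
The $S,R$ pieces combine with $X^\top(N+2K^{(i)\top}S+K^{(i)\top}RK^{(i)})X$ to reproduce the full running cost $X^\top NX+2u^\top SX+u^\top Ru$. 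That identity is purely algebraic and I would verify it explicitly. Finally I use \eqref{eq:xpx} together with \eqref{Lyapunov x matrix} in the on-policy proof to replace $x^\top P_k^{(i+1)}x$ by the quadratic form with $\mathbf{Q}_k^{(i+1)}$. The result is exactly \eqref{eq:q-learning offPolicy Evaluation}.

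For the converse, I would assume that \eqref{eq:q-learning offPolicy Evaluation} holds for all initial data, where the block entries of $\mathbf{Q}_k^{(i+1)}$ have the structure \eqref{eq:Qxx}--\eqref{eq:Quu} in terms of unknown $P^{(i+1)}$. I would then repeat the argument of Lemma \ref{lemma-1} on $[t,t+\Delta t]$: Itô's formula on the short interval, divide by $\Delta t$, and let $\Delta t\to0$. The $v$-terms cancel by the same algebra, and this recovers \eqref{Lyapunov x} for all $x$, hence \eqref{Lyapunov Recursion}. The uniqueness in Theorem \ref{theorem 1} then pins down $P^{(i+1)}$.

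I expect the main obstacle to be this converse direction together with the bookkeeping of the $v$-terms. One must ensure that solving \eqref{eq:q-learning offPolicy Evaluation} does not admit spurious $\mathbf{Q}_k^{(i+1)}$, since the identity couples different modes through $Q^{(i+1)}_{ux}(\alpha_s)$ and $Q^{(i+1)}_{uu}(\alpha_s)$. I would handle this by arguing, as in Lemma \ref{lemma-1}, that the identity holds for every $(t,x,k)$. The small-interval limit then forces the pointwise Lyapunov identity in each mode, and uniqueness from Lemma \ref{lemma-stabilizer} closes the argument.
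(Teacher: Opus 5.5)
Your forward direction is the paper's argument: the splitting $v=u-K^{(i)}X$, It\^o's formula, substitution of \eqref{Lyapunov Recursion}, and regrouping via $Q_{ux},Q_{uu}$ to reach \eqref{eq:xpx3}--\eqref{eq:xpx4}. Your $v$-term algebra, including the recovery of $X^\top NX+2u^\top SX+u^\top Ru$, is correct, and so is the final appeal to Theorems \ref{theorem 1}--\ref{theorem 2} plus continuity. The gap is the converse. The paper also only asserts it, and the argument you propose for it does not work. The obstruction is not the mode coupling but the extra $P_k$ in \eqref{eq:Qxx}.

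Let $M_k$ be the left-hand side of \eqref{Lyapunov Recursion} evaluated at a candidate $P$. If $\mathbf{Q}_k^{(i+1)}$ is built from $P$ via \eqref{eq:Qxx}--\eqref{eq:Quu} and $u_k^{(i)}=K_k^{(i)}x$, the first term of \eqref{eq:q-learning offPolicy Evaluation} equals $x^\top(P_k+M_k)x$. Replacing it by $x^\top P_kx$ is exactly \eqref{Lyapunov x matrix}, i.e.\ the conclusion itself. Lemma \ref{lemma-1}'s converse starts from \eqref{eq:xpx}, whose first term is already $x^\top P_kx$, so it cannot simply be transplanted. If you keep $M$, It\^o's formula for $X^\top P(\alpha_s)X$ (valid for any $P$) together with your algebra reduces \eqref{eq:q-learning offPolicy Evaluation} exactly to
\[
x^\top M_kx=\mathbb{E}\Big[\int_t^\infty X(s)^\top M(\alpha_s)X(s)\,ds\;\Big|\;X(t)=x,\alpha_t=k\Big],\qquad k\in\mathcal{M}.
\]
For a feedback behaviour input $u=K^b(\alpha_s)X$, the small-interval limit then yields only $M_k+\mathcal{A}^b_k(M)=0$. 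Here $\mathcal{A}^b_k(M)$ is the left-hand side of \eqref{cond1} with $(K_k,P_k)$ replaced by $(K^b_k,M_k)$. This forces $M=0$ only if $-1$ is not an eigenvalue of $\mathcal{A}^b$, and mean-square stability does not exclude that. So the uniqueness in Lemma \ref{lemma-stabilizer} is never reached.

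A concrete failure: take $n=r=1$ and two identical modes with coefficients $a,b,c,d$, equal weights and equal target gains. Choose a behaviour gain $k^b$ in both modes with $2(a+bk^b)+(c+dk^b)^2=-1$; this gain is mean-square stabilizing. For every $p>0$, the choice $P_1=P_2=p$ gives $M_1=M_2$, and since the rows of $\Pi$ sum to zero, $\mathcal{A}^b(M)=-M$. Hence \eqref{eq:q-learning offPolicy Evaluation} holds for all such $p$ and does not determine $P^{(i+1)}$. Taking $k^b$ equal to the target gain shows the same defect in the on-policy Lemma \ref{lemma-1}.

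To close the argument you have two options:
\begin{itemize}
\item Add an identifiability hypothesis, e.g.\ that $-1$ is not an eigenvalue of $\mathcal{A}^b$, or a population-level version of the rank condition in Assumption \ref{as:3.2}.
\item Pose the evaluation step in the form \eqref{eq:xpx3}, with $x^\top P_kx$ as the first term. There your small-interval argument does give $\mathbb{E}\int_t^\infty X^\top M(\alpha_s)X\,ds=0$ for all initial data, hence $M=0$ and \eqref{Lyapunov Recursion}.
\end{itemize}
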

\begin{proof}
First, we prove that Algorithm \ref{alg:3} is equivalent to Lyapunov Iteration Scheme.
We rewrite \eqref{system} as
\begin{equation}\label{system1}
		\begin{aligned}
		dX(s) =\ & \Big[A(\alpha_s)X(s) + B(\alpha_s)K^{(i)}(\alpha_s) X(s) + B(\alpha_s)\big(u(s) - K^{(i)}(\alpha_s) X(s)\big)\Big] ds \\
		& + \Big[C(\alpha_s)X(s) + D(\alpha_s) K^{(i)}(\alpha_s) X(s) + D(\alpha_s)\big(u(s) - K^{(i)}(\alpha_s) X(s)\big)\Big] dW(s).
	 \end{aligned}
	\end{equation}
	We apply generalized It\^o's formula to $X(s)^\top P^{(i+1)}(\alpha_s)X(s)$ by Lemma \ref{lem:Ito}, where $P^{(i+1)}(\alpha_s)$ is the solution of Lyapunov Recursion \eqref{Lyapunov Recursion},
and $X(s)$ is the state in \eqref{system}.
We have
		\begin{align}\label{eq:xpx1}
		&\quad d\Big[X(s)^\top P^{(i+1)}(\alpha_s) X(s)\Big] \nonumber\\
		&= X(s)^\top \Big[P^{(i+1)}(\alpha_s)\big(A(\alpha_s)+B(\alpha_s)K^{(i)}(\alpha_s)\big) +(A(\alpha_s)+B(\alpha_s)K^{(i)}(\alpha_s))^\top P^{(i+1)}(\alpha_s)\Big]X(s)ds \nonumber\\
		&\quad +2X(s)^\top P^{(i+1)}(\alpha_s)B(\alpha_s)(u(s)-K^{(i)}(\alpha_s)X(s))ds+\Big[(C(\alpha_s)+D(\alpha_s)K^{(i)}(\alpha_s))X(s) \nonumber\\
		&\quad+D(\alpha_s)(u(s)-K^{(i)}(\alpha_s)X(s))\Big]^\top P^{(i+1)}(\alpha_s)\Big[(C(\alpha_s)+D(\alpha_s)K^{(i)}(\alpha_s))X(s) \nonumber\\
		&\quad +D(\alpha_s)(u(s)-K^{(i)}(\alpha_s)X(s))\Big]ds+ X(s)^\top \sum_{j=1}^{ \mathbf{K} } \pi_{\alpha_s j}P_j^{(i+1)}X(s)ds+\{ \ldots \} dW(s).
	 \end{align}	 
	Then, integrating both sides of~\eqref{eq:xpx1} and taking conditional expectation from $t$ to $\infty$, we obtain
		\begin{align}\label{eq:xpx2}
		&\quad x^\top P_k^{(i+1)} x \notag\\
&=-\mathbb{E}\bigg\{ \sum_{k=1}^ \mathbf{K} \int_t^{\infty}\bigg[X(s)^\top\Big[P_k^{(i+1)}(A_k+B_k K^{(i)}_k)+(A_k+B_kK^{(i)}_k)^\top P^{(i+1)}_k \notag\\
&\quad+(C_k+D_kK^{(i)}_k)^\top P^{(i+1)}_k(C_k+D_kK^{(i)}_k) +\sum_{j=1}^{ \mathbf{K} } \pi_{k j}P_j^{(i+1)}\Big] X(s)\notag\\
		&\quad+2X(s)^\top P^{(i+1)}_kB_k\Big(u(s)-K^{(i)}_kX(s)\Big) +2X(s)^\top\Big(C_k+D_kK^{(i)}_k\Big)^\top P^{(i+1)}_kD_k\Big(u(s)-K^{(i)}_kX(s)\Big)\notag\\
		&\quad+\Big(u(s)-K^{(i)}_kX(s)\Big)^\top D_k^\top P^{(i+1)}_kD_k\Big(u(s)-K^{(i)}_kX(s)\Big)\bigg]\chi_{\{\alpha_s=k\}}(s)ds\bigg|X(t)=x,\alpha_t=k \bigg\},\notag\\
&~~~~~~~~~~~~~~~~~~~~~~~~~~~~~~~~~~~~~~~~~~~~~~~~~~~~~~~~~~~~~~~~~~~~~~~~~~\hspace*{\fill} k\in \mathcal{M}.
		\end{align}
Substituting \eqref{Lyapunov Recursion} into \eqref{eq:xpx2}, we obtain
		\begin{align}\label{eq:xpx3}
		&\quad x^\top P_k^{(i+1)} x + \mathbb{E} \bigg\{\int_t^{\infty} \Big[2\Big(u(s) - K^{(i)}(\alpha_s) X(s)\Big)^\top \Big(B(\alpha_s)^\top P^{(i+1)}(\alpha_s) \nonumber\\
&\quad+D(\alpha_s)^\top P^{(i+1)}(\alpha_s)C(\alpha_s)\Big) X(s)+\Big(u(s) - K^{(i)}(\alpha_s) X(s)\Big)^\top D(\alpha_s)^\top P^{(i+1)}(\alpha_s)D(\alpha_s)\Big(u(s) \nonumber\\
&\quad + K^{(i)}(\alpha_s) X(s)\Big)\Big]ds\bigg|X(t)=x,\alpha_t=k \bigg\}\nonumber\\
		& =\mathbb{E}\bigg\{\int_t^{\infty} X(s)^\top \Big[N(\alpha_s) + {K^{(i)}(\alpha_s)}^\top R(\alpha_s) K^{(i)}(\alpha_s)\nonumber\\
		&\quad +2S(\alpha_s)^\top K^{(i)}(\alpha_s)\Big] X(s) ds\bigg|X(t)=x,\alpha_t=k \bigg\}, \hspace*{\fill} k\in \mathcal{M}.
		\end{align}
Adding \eqref{Lyapunov x matrix} to \eqref{eq:xpx3} and using $u_k^{(i)}=\Gamma(\mathbf{Q}_k^{(i)})x~(k\in \mathcal{M})$, we obtain
\begin{align}\label{eq:xpx4}
&\quad \begin{bmatrix}
x \\
u_k^{(i)}
\end{bmatrix}^\top 
\mathbf{Q}^{(i+1)}_k 
\begin{bmatrix}
x \\
u_k^{(i)}
\end{bmatrix}+
\mathbb{E} \bigg\{\int_t^{\infty} \Big[2\Big(u(s) - \Gamma(\mathbf{Q}^{(i)}(\alpha_s))X(s)\Big)^\top \Big(Q^{(i+1)}_{ux}(\alpha_s)-S(\alpha_s)\Big) X(s)\nonumber\\
&\quad+\Big(u(s) - \Gamma(\mathbf{Q}^{(i)}(\alpha_s)) X(s)\Big)^\top \Big(Q^{(i+1)}_{uu}(\alpha_s)-R(\alpha_s)\Big)\Big(u(s)\nonumber\\
&\quad + \Gamma(\mathbf{Q}^{(i)}(\alpha_s)) X(s)\Big)\Big]ds\bigg|X(t)=x,\alpha_t=k \bigg\}\nonumber\\
&= \mathbb{E}\bigg\{\int_t^{\infty} X(s)^\top \Big[N(\alpha_s) + {\Gamma(\mathbf{Q}^{(i)}(\alpha_s))}^\top R(\alpha_s) \Gamma(\mathbf{Q}^{(i)}(\alpha_s))\nonumber\\
&\qquad\qquad\qquad+2S(\alpha_s)^\top \Gamma(\mathbf{Q}^{(i)}(\alpha_s))\Big] X(s) ds\bigg|X(t)=x,\alpha_t=k \bigg\}, ~~~~~~~\hspace*{\fill} k\in \mathcal{M}.
\end{align}
Deleting the same term $X(s)^\top \big[2S(\alpha_s)^\top \Gamma(\mathbf{Q}^{(i)}(\alpha_s))+{\Gamma(\mathbf{Q}^{(i)}(\alpha_s))}^\top R(\alpha_s) \Gamma(\mathbf{Q}^{(i)}(\alpha_s))\big]X(s)$ from both sides of \eqref{eq:xpx4}, we have
\eqref{eq:q-learning offPolicy Evaluation}. 
Conversely, if \(P_k^{(i+1)} \in \mathcal S_{++}^n\) \((k \in \mathcal M)\) is the solution to \eqref{eq:q-learning offPolicy Evaluation}, then similarly to the proof of Lemma \ref{lemma-1}, it is readily verified that \eqref{Lyapunov Recursion} holds.
Moreover, \eqref{algorithm 1-improvement} is equivalent to \eqref{eq:q-learning Policy Improvement} from Lemma \ref{lemma-1}.
Therefore, Algorithm \ref{alg:3} is equivalent to Lyapunov Iteration Scheme.

Then, from Theorems \ref{theorem 1} and \ref{theorem 2}, we obtain the conclusion.
\end{proof}

Next, we show the implementation of Algorithm \ref{alg:3}.
To solve the parameters of $\mathbf{Q}^{(i+1)}_k$ $(k\in \mathcal{M})$ in~\eqref{eq:q-learning offPolicy Evaluation} of Algorithm \ref{alg:3} and $\Gamma(\mathbf{Q}^{(i+1)}_k)~(k\in \mathcal{M})$ in~\eqref{eq:q-learning Policy Improvement}, 
we generate the state trajectory $X_{h,k}(s)=X(s;t,x_h,k)$, under the corresponding control input trajectory $u_{h,k}(s)=u(s;t,x_h,k)$ 
over horizon \([t,\infty)\) with \(h = 1,2,\ldots,\mathbf{H}\). 
In addition, we take $u_{h,k}^{(i)}=\Gamma(\mathbf{Q}_k^{(i)})x_h~(k\in \mathcal{M})$ in Algorithm \ref{alg:3}. We calculate the conditional expectation of \(\mathbf{N}\) sample paths \(X_{h,k,\nu}\) and \(u_{h,k,\nu}\) \((\nu=1,2,...,\mathbf{N})\). 
Moreover, we adopt $\delta_t$ step size, and the times at which the Markov chain state \(k\) is equal to $j$ are denoted as \(t_l^{j}\) (\(0\leq t_l^{j}\)), 
where \(l = 1,2,\ldots,\mathbf{L}\), and \(\mathbf{L}\) is sufficiently large. We denote
\begin{equation*}
I_{xx} = \begin{bmatrix}
\bar{x}_{1} \\
\bar{x}_{2} \\
\vdots \\
\bar{x}_{\mathbf{H}}
\end{bmatrix}, \quad\quad\quad
\delta^{(i)}_{ux,k} = \begin{bmatrix}
{u^{(i)\top}_{1,k}} \otimes x_1^\top \\
{u^{(i)\top}_{2,k}} \otimes x_2^\top \\
\vdots \\
{u^{(i)\top}_{\mathbf{H},k}} \otimes x_{\mathbf{H}}^\top
\end{bmatrix},\quad\quad\quad
\delta^{(i)}_{uu,k} = \begin{bmatrix}
\bar{u}^{(i)}_{1,k} \\
\bar{u}^{(i)}_{2,k} \\
\vdots \\
\bar{u}^{(i)}_{\mathbf{H},k}
\end{bmatrix},
\end{equation*}
\begin{equation*}
I_{uu}^{kj} = \begin{bmatrix}
\frac{1}{\mathbf{N}} \sum_{\nu=1}^{\mathbf{N}} \left[ \sum_{l=1}^{\mathbf{L}} \bar{u}_{1,k,\nu}(t_l^{j}) \delta_t \right] \\
\frac{1}{\mathbf{N}} \sum_{\nu=1}^{\mathbf{N}} \left[ \sum_{l=1}^{\mathbf{L}} \bar{u}_{2,k,\nu}(t_l^{j}) \delta_t \right] \\
\vdots \\
\frac{1}{\mathbf{N}} \sum_{\nu=1}^{\mathbf{N}} \left[ \sum_{l=1}^{\mathbf{L}} \bar{u}_{\mathbf{H},k,\nu}(t_l^{j}) \delta_t \right]
\end{bmatrix},\quad
I_{XX}^{kj} = \begin{bmatrix}
	\frac{1}{\mathbf{N}} \sum_{\nu=1}^{\mathbf{N}} \left[ \sum_{l=1}^{\mathbf{L}} X_{1,k,\nu}(t_l^{j}) \otimes X_{1,k,\nu}(t_l^{j}) \delta_t \right] \\
\frac{1}{\mathbf{N}} \sum_{\nu=1}^{\mathbf{N}} \left[ \sum_{l=1}^{\mathbf{L}} X_{2,k,\nu}(t_l^{j}) \otimes X_{2,k,\nu}(t_l^{j}) \delta_t \right] \\
\vdots \\
\frac{1}{\mathbf{N}} \sum_{\nu=1}^{\mathbf{N}} \left[ \sum_{l=1}^{\mathbf{L}} X_{\mathbf{H},k,\nu}(t_l^{j}) \otimes X_{\mathbf{H},k,\nu}(t_l^{j}) \delta_t \right]
	\end{bmatrix}, 
\end{equation*}
\begin{equation*}
I_{Xu}^{kj} = \begin{bmatrix}
\frac{1}{\mathbf{N}} \sum_{\nu=1}^{\mathbf{N}} \left[ \sum_{l=1}^{\mathbf{L}} X_{1,k,\nu}(t_l^{j}) \otimes u_{1,k,\nu}(t_l^{j}) \delta_t \right] \\
\frac{1}{\mathbf{N}} \sum_{\nu=1}^{\mathbf{N}} \left[ \sum_{l=1}^{\mathbf{L}} X_{2,k,\nu}(t_l^{j}) \otimes u_{2,k,\nu}(t_l^{j}) \delta_t \right] \\
\vdots \\
\frac{1}{\mathbf{N}} \sum_{\nu=1}^{\mathbf{N}} \left[ \sum_{l=1}^{\mathbf{L}} X_{\mathbf{H},k,\nu}(t_l^{j}) \otimes u_{\mathbf{H},k,\nu}(t_l^{j}) \delta_t \right]
\end{bmatrix},\quad
\gamma_{k}= \begin{bmatrix}
	\frac{1}{\mathbf{N}} \sum_{\nu=1}^{\mathbf{N}} \left[ \sum_{l=1}^{\mathbf{L}} \Xi_{1,k,\nu}(\alpha_{t_l}) \delta_t \right] \\
\frac{1}{\mathbf{N}} \sum_{\nu=1}^{\mathbf{N}} \left[ \sum_{l=1}^{\mathbf{L}} \Xi_{2,k,\nu}(\alpha_{t_l}) \delta_t \right] \\
\vdots \\
\frac{1}{\mathbf{N}} \sum_{\nu=1}^{\mathbf{N}} \left[ \sum_{l=1}^{\mathbf{L}} \Xi_{\mathbf{H},k,\nu}(\alpha_{t_l}) \delta_t \right]
	\end{bmatrix} 
	\end{equation*}
	with 
\begin{align*}
\Xi_{h,k,l}(\alpha_{t_l})&={X_{h,k,\nu}(t_l)}^\top N(\alpha_{t_l})X_{h,k,\nu}(t_l)+2{u_{h,k,\nu}(t_l)}^{\top} S(\alpha_{t_l})X_{h,k,\nu}(t_l)\\
&\quad +{u_{h,k,\nu}(t_l)}^{\top} R(\alpha_{t_l})u_{h,k,\nu}(t_l),\qquad\qquad h=1,2,...,\mathbf{H},
\end{align*}
	where $k,~j\in \mathcal{M}$. We denote
\begin{equation*}
	\Phi^{(i)}= \begin{bmatrix}
I_{ \mathbf{K} } \otimes I_{xx},&\Theta^{(i)},&\zeta^{(i)}
	\end{bmatrix},\quad
\eta=\begin{bmatrix}
\gamma_{1}, &\gamma_{2}, &\cdots, &\gamma_{ \mathbf{K} }
\end{bmatrix}^\top, 
\end{equation*}
where $I_{\mathbf{K}}$ is the $\mathbf{K} \times \mathbf{K}$ identity matrix, and the block matrices $\Theta^{(i)}$ and $\zeta^{(i)}$ are defined as
\begin{equation*}
\Theta^{(i)} = \begin{bmatrix}
\Theta_{11}^{(i)} & \Theta_{12}^{(i)} & \dots & \Theta_{1\mathbf{K}}^{(i)} \\
\Theta_{21}^{(i)} & \Theta_{22}^{(i)} & \dots & \Theta_{2\mathbf{K}}^{(i)} \\
\vdots & \vdots & \ddots & \vdots \\
\Theta_{\mathbf{K}1}^{(i)} & \Theta_{\mathbf{K}2}^{(i)} & \dots & \Theta_{\mathbf{K}\mathbf{K}}^{(i)}
\end{bmatrix}, \quad
\zeta^{(i)} = \begin{bmatrix}
\zeta_{11}^{(i)} & \zeta_{12}^{(i)} & \dots & \zeta_{1\mathbf{K}}^{(i)} \\
\zeta_{21}^{(i)} & \zeta_{22}^{(i)} & \dots & \zeta_{2\mathbf{K}}^{(i)} \\
\vdots & \vdots & \ddots & \vdots \\
\zeta_{\mathbf{K}1}^{(i)} & \zeta_{\mathbf{K}2}^{(i)} & \dots & \zeta_{\mathbf{K}\mathbf{K}}^{(i)}
\end{bmatrix}
\end{equation*}
with their block elements given by
\begin{equation*}
\Theta_{kj}^{(i)} = \begin{cases}
2\delta^{(i)}_{ux,k} + 2I_{Xu}^{kk} - 2I_{XX}^{kk}(I_n \otimes \Gamma(\mathbf{Q}_k^{(i)})), & \text{if } k = j, \\
2I_{Xu}^{kj} - 2I_{XX}^{kj}(I_n \otimes \Gamma(\mathbf{Q}_j^{(i)})), & \text{if } k \neq j,
\end{cases}
\end{equation*}
and
\begin{equation*}
\zeta_{kj}^{(i)} = \begin{cases}
\delta^{(i)}_{uu,k} + I_{uu}^{kk} - I_{XX}^{kk}\bar{\Gamma}(\mathbf{Q}_k^{(i)}), & \text{if } k = j, \\
I_{uu}^{kj} - I_{XX}^{kj}\bar{\Gamma}(\mathbf{Q}_j^{(i)}), & \text{if } k \neq j.
\end{cases}
\end{equation*}
We note that $\mathbf{H}$ is required to satisfy $\mathbf{H} \geq \frac{1}{2} [n(n+1) + 2nr + r(r+1)]$ in order to solve 
$Q_{kxx}^{(i+1)}$, $Q_{kux}^{(i+1)}$ and $Q_{kuu}^{(i+1)}$ with $k\in \mathcal{M}$. 

\begin{assumption}\label{as:3.2}
In order to guarantee that Algorithm \ref{alg:3} can be implemented online at each step, $\Phi^{(i)}$ are required to have full column rank, we assume there exists $\mathbf{H}_0>0$ such that for all $\mathbf{H} \geq \mathbf{H}_0$, $\mathrm{rank}(\Phi^{(i)})=\frac{ \mathbf{K} }{2} [n(n+1) + 2nr + r(r+1)]$. 
\end{assumption} 

Under Assumption \ref{as:3.2}, using least-squares method, we have
\begin{equation}\label{eq:algo3-Kronecker}
\begin{bmatrix}
\xi_{xx}^\top, & \xi_{ux}^\top,& \xi_{uu}^\top
\end{bmatrix}^\top 
= \left( {\Phi^{(i)}}^\top \Phi^{(i)} \right)^{-1} {\Phi^{(i)}}^\top \eta,    
\end{equation}
where
\begin{equation*}
\begin{aligned}
&\xi_{xx}=[vech(Q^{(i+1)}_{1xx}),vech(Q^{(i+1)}_{2xx}),\cdots,vech(Q^{(i+1)}_{\mathbf{K} xx})], \\
&\xi_{ux}=[vec(Q^{(i+1)}_{1ux}),vec(Q^{(i+1)}_{2ux}),\cdots,vec(Q^{(i+1)}_{\mathbf{K} ux})],\\
&\xi_{uu}=[vech(Q^{(i+1)}_{1uu}),vech(Q^{(i+1)}_{2uu}),\cdots, vech(Q^{(i+1)}_{\mathbf{K} uu})].
\end{aligned}
\end{equation*}
Therefore, \eqref{eq:q-learning offPolicy Evaluation} can be solved by \eqref{eq:algo3-Kronecker}. We also can obtain \eqref{eq:q-learning Policy Improvement}.

\section{Numerical Example}\label{sec.4}
~~~~~In this section, we provide a simulation example to illustrate the proposed algorithms. 
We consider a two-dimensional linear stochastic system with two regimes. 
By setting $n=2$, $r=1$ and $ \mathbf{K} =2$, the parameter matrices of system \eqref{system} 
are given as follows
\begin{equation*}
A_1 = \begin{bmatrix}
-0.5 & 1.0 \\
0.0 & -0.3 
\end{bmatrix}, \quad
B_1 = \begin{bmatrix}
0.0 \\
1.0 
\end{bmatrix}, \quad
C_1 = \begin{bmatrix}
0.1 & 0.0 \\
0.0 & 0.1 
\end{bmatrix}, \quad 
D_1 = \begin{bmatrix}
0.05 \\
0.05 
\end{bmatrix},
\end{equation*}
\begin{equation*}
A_2 = \begin{bmatrix}
-0.4 & 0.8 \\
0.0 & -0.6 
\end{bmatrix}, \quad
B_2 = \begin{bmatrix}
1.0 \\
0.0 
\end{bmatrix}, \quad
C_2 = \begin{bmatrix}
0.1 & 0.05 \\
0.05 & 0.1 
\end{bmatrix}, \quad 
D_2 = \begin{bmatrix}
0.05 \\
0.05 
\end{bmatrix},
\end{equation*}
and the generator matrix is
\begin{equation*}
\Pi = \begin{bmatrix}
-2 & 2 \\
2 & -2
\end{bmatrix}.
\end{equation*}
The coefficients in cost functional are given as
\begin{equation*}
N_1 = \begin{bmatrix}
0.4 & 0.05 \\
0.05 & 0.2 
\end{bmatrix}, \quad 
S_1 = \begin{bmatrix}
0.1 & 0.03
\end{bmatrix}, \quad
R_1=0.12,
\end{equation*}
\begin{equation*}
N_2 = \begin{bmatrix}
0.3 & 0.04 \\
0.04 & 0.5 
\end{bmatrix}, \quad 
S_2 = \begin{bmatrix}
0.05 & 0.07 
\end{bmatrix},\quad
R_2=0.08.
\end{equation*}
We find 
\begin{equation*}
\Gamma(\mathbf{Q}^{(0)}_1) = \begin{bmatrix}
-4.41 & -0.69
\end{bmatrix}, \quad
\Gamma(\mathbf{Q}^{(0)}_2) = \begin{bmatrix}
-1.15 & 3.02
\end{bmatrix}
\end{equation*}
can drive the trajectories tend to zero when time $s$ increases. Therefore, we choose them as the initial stabilizer. 
To satisfy Assumptions \ref{as:3.1} and \ref{as:3.2}, we randomly choose more than $\frac{1}{2}[2\times(2+1)+2\times2\times1+1\times(1+1)]=6$ 
initial state vectors for each \(k\). The components of each initial state vector are independently sampled from the uniform distribution on \([0,10]\).
Here we set $\mathbf{H}=200$ and \(\mathbf{N}=50\) in this example.

(1) Algorithm \ref{alg:2} is implemented as in \eqref{eq:q-learning onPolicy Evaluation} and \eqref{eq:q-learning Policy Improvement}. At every iteration, 
we obtain \(X^{(i)} = [X^{(i)}_{(1)}, X^{(i)}_{(2)}]^\top\) and run system \eqref{system} with $u^{(i)}_k = \Gamma(\mathbf{Q}_k^{(i)}) x_h+\omega$, 
where the PE signal $\omega$ is a Gaussian white noise process with $\omega \sim \mathcal{N}(0, 8)$.
Fig. \ref{fig:convergence1} shows values of $\|\mathbf{Q}_k^{(i+1)}-\mathbf{Q}_k^{(i)}\|~(k=1,2)$ in each iteration, illustrating the variation in the differences between $\mathbf{Q}_k^{(i+1)}$ and 
$\mathbf{Q}_k^{(i)}$. We obtain that the sequence $\{\mathbf{Q}_k^{(i)}\}_{i=1}^{\infty}$ and $\{\Gamma(\mathbf{Q}_k^{(i)})\}_{i=1}^{\infty}~(k=1,2)$ converge separately to 
\begin{equation*}
\mathbf{Q}^*_1 = \begin{bmatrix}
0.2826 & 0.2008 & 0.1206\\
0.2008 & 0.4668 & 0.1918\\
0.1206 & 0.1918 & 0.1209
\end{bmatrix}, \quad
\mathbf{Q}^*_2 = \begin{bmatrix}
0.4441 & 0.1622 & 0.1665\\
0.1622 & 0.3017 & 0.0771\\
0.1665 & 0.0771 & 0.0809
\end{bmatrix},
\end{equation*}
\begin{equation*}
\Gamma(\mathbf{Q}^*_1) = \begin{bmatrix}
-0.9972 & -1.5860 
\end{bmatrix}, \quad
\Gamma(\mathbf{Q}^*_2) = \begin{bmatrix}
-2.0576 & -0.9527
\end{bmatrix}.
\end{equation*}
To check whether $\mathbf{Q}^*_k~(k=1,2)$ is the solution of CGAREs, we use \eqref{eq:Qxx}, \eqref{eq:Qxu} and \eqref{eq:Quu} to solve for the parameter $P^*_k$. We obtain
\begin{equation*}
P^*_1 = \begin{bmatrix}
0.1748 & 0.0196 \\
0.0196 & 0.1609 
\end{bmatrix}, \quad
P^*_2 = \begin{bmatrix}
0.1153 & 0.0043 \\
0.0043 & 0.2418 
\end{bmatrix}.
\end{equation*}
We define the left-hand sides of \eqref{SARE} as 
\begin{equation}\label{eq:RP}
\begin{aligned}
\mathcal{R}(P_k)&=A_k^\top P_k+P_kA_k+C_k^\top P_kC_k+N_k+\sum_{j=1}^{ \mathbf{K} }\pi_{kj}P_j\\
&~-(P_kB_k+C_k^\top P_kD_k+S_k^\top)(R_k+D_k^\top P_k D_k)^{-1}(B_k^\top P_k+D_k^\top P_kC_k+S_k ),~~k\in \mathcal{M}.
\end{aligned} 
\end{equation}
By substituting $P^*_k$ into \eqref{eq:RP}, we obtain $||\mathcal{R}(P^*_1)||=1.8990\times 10^{-2}$ and $||\mathcal{R}(P^*_2)||=1.9535\times 10^{-2}$.
System \eqref{system} is simulated separately with $\Gamma(\mathbf{Q}^{(0)}_k)$ and $\Gamma(\mathbf{Q}^{*}_k)$ ($k=1,2$) to obtain the state trajectories \(X^{(0)} = [X^{(0)}_{(1)}, X^{(0)}_{(2)}]^\top\) and \(X^{*} = [X^{*}_{(1)}, X^{*}_{(2)}]^\top\).
As shown in Fig. \ref{fig:X1 trajectory1} and Fig. \ref{fig:X2 trajectory1}, the optimal state trajectory $X^{*}$ converges to zero much faster than the initial state trajectory $X^{(0)}$ under the Markov chain $\alpha$.

(2) Algorithm \ref{alg:3} is implemented as in \eqref{eq:q-learning offPolicy Evaluation} and \eqref{eq:q-learning Policy Improvement}. 
The system state trajectory \(X = [X_{(1)}, X_{(2)}]^\top\) is generated by simulating system \eqref{system} under a stabilizing control input \(u\), as shown in Fig.~\ref{fig:X trajectory3} along with the Markov chain trajectory $\alpha$.
Fig. \ref{fig:convergence3} shows values of $\|\mathbf{Q}_k^{(i+1)}-\mathbf{Q}_k^{(i)}\|~(k=1,2)$ in each iteration, illustrating
the variation in the differences between $\mathbf{Q}_k^{(i+1)}$ and $\mathbf{Q}_k^{(i)}$. We obtain that the sequence $\{\mathbf{Q}_k^{(i)}\}_{i=1}^{\infty}$ and $\{\Gamma(\mathbf{Q}^{(i)}_k)\}_{i=1}^{\infty}~(k=1,2)$ converge separately to 
\begin{equation*}
\mathbf{Q}^*_1 = \begin{bmatrix}
0.2833 & 0.2022 & 0.1197\\
0.2022 & 0.4591 & 0.1886\\
0.1197 & 0.1886 & 0.1209
\end{bmatrix}, \quad
\mathbf{Q}^*_2 = \begin{bmatrix}
0.4430 & 0.1578 & 0.1651\\
0.1578 & 0.3089 & 0.0778\\
0.1651 & 0.0778 & 0.0809
\end{bmatrix},
\end{equation*}
\begin{equation*}
\Gamma(\mathbf{Q}^*_1) = \begin{bmatrix}
-0.9898 & -1.5712 
\end{bmatrix}, \quad
\Gamma(\mathbf{Q}^*_2) = \begin{bmatrix}
-2.0406 & -0.9617 
\end{bmatrix}.
\end{equation*}
To check whether $\mathbf{Q}^*_k~(k=1,2)$ is the solution of CGAREs, we use \eqref{eq:Qxx}, \eqref{eq:Qxu} and \eqref{eq:Quu} to solve for the parameter $P^*_k$. We obtain
\begin{equation*}
P^*_1 = \begin{bmatrix}
0.1731 & 0.0187 \\
0.0187 & 0.1591 
\end{bmatrix}, \quad
P^*_2 = \begin{bmatrix}
0.1139 & 0.0063 \\
0.0063 & 0.2373 
\end{bmatrix}.
\end{equation*}
By substituting $P^*_k$ into \eqref{eq:RP}, we obtain $||\mathcal{R}(P^*_1)||=1.0681\times 10^{-2}$ and $||\mathcal{R}(P^*_2)||=1.3227\times 10^{-2}$.
System \eqref{system} is simulated with $\Gamma(\mathbf{Q}^{*}_k)$ ($k=1,2$) to obtain the optimal state trajectory $X^* = [X^*_{(1)}, X^*_{(2)}]^\top$ as shown in
Fig. \ref{fig:X* trajectory3}, along with the Markov chain trajectory $\alpha$. 
\begin{figure*}[t]
    \centering
    \begin{subfigure}[b]{0.32\textwidth}
        \centering
        \includegraphics[width=\textwidth]{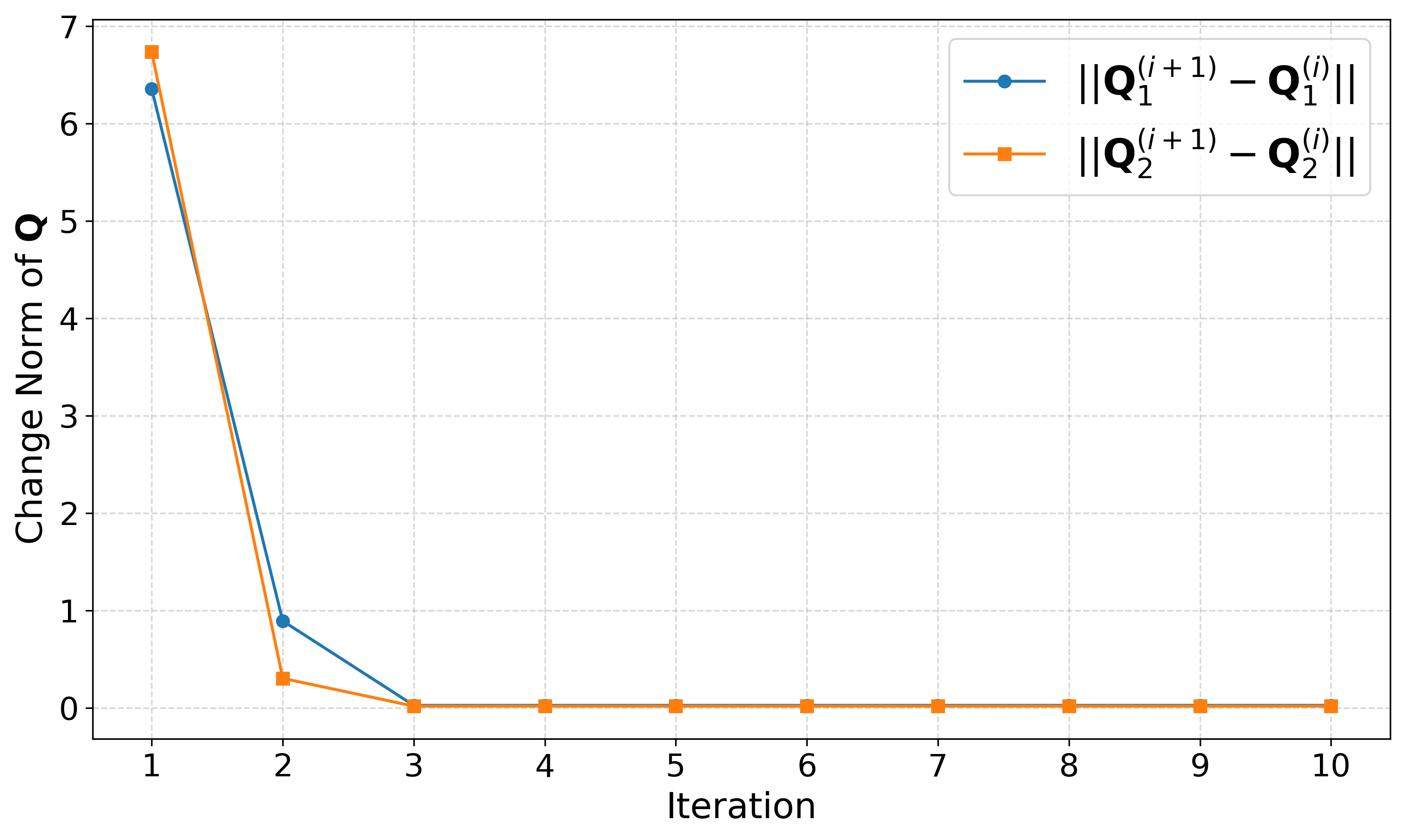}
        \caption{}
        \label{fig:convergence1}
    \end{subfigure}
    \hfill
    \begin{subfigure}[b]{0.32\textwidth}
        \centering
        \includegraphics[width=\textwidth]{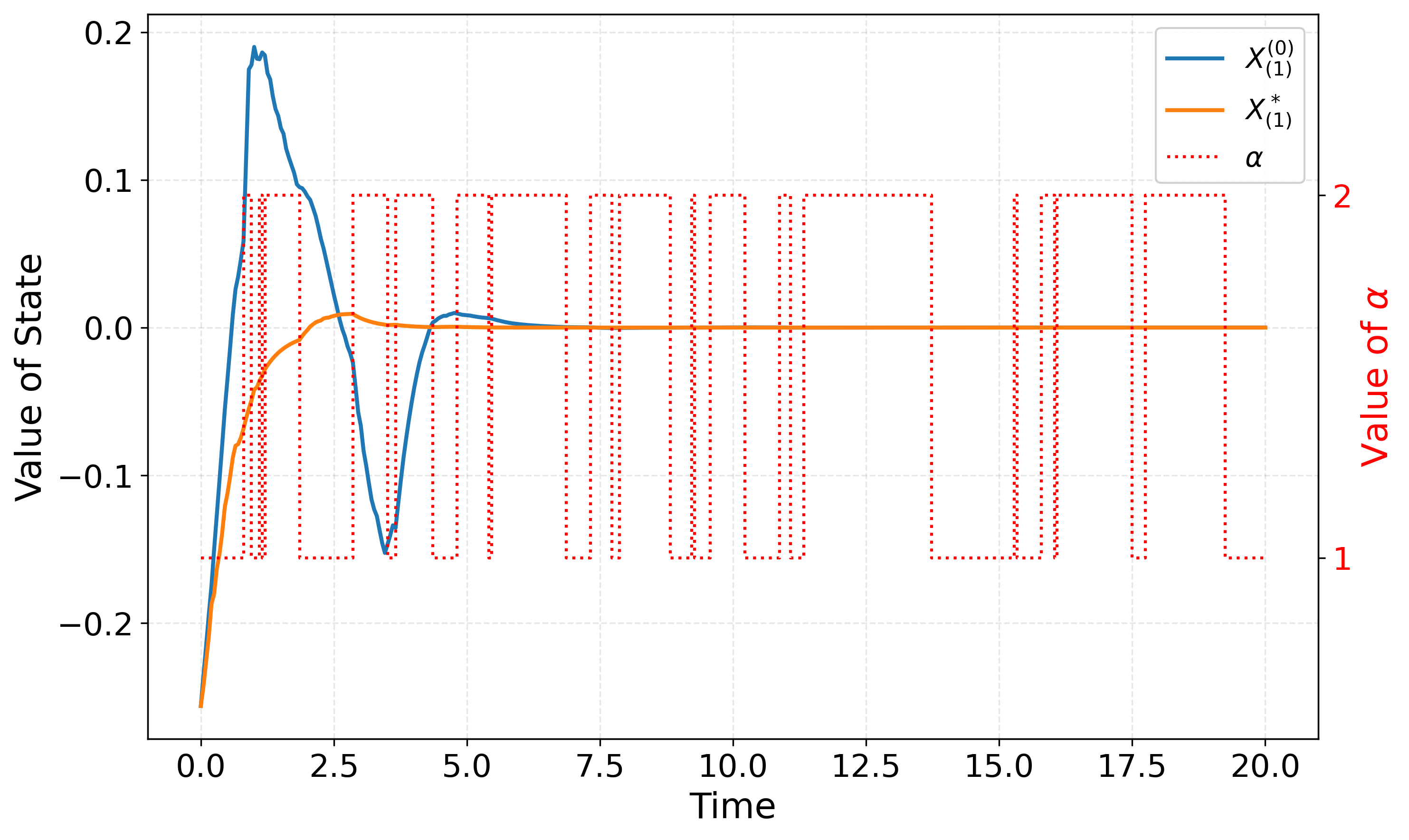}
        \caption{}
        \label{fig:X1 trajectory1}
    \end{subfigure}
    \hfill
    \begin{subfigure}[b]{0.32\textwidth}
        \centering
        \includegraphics[width=\textwidth]{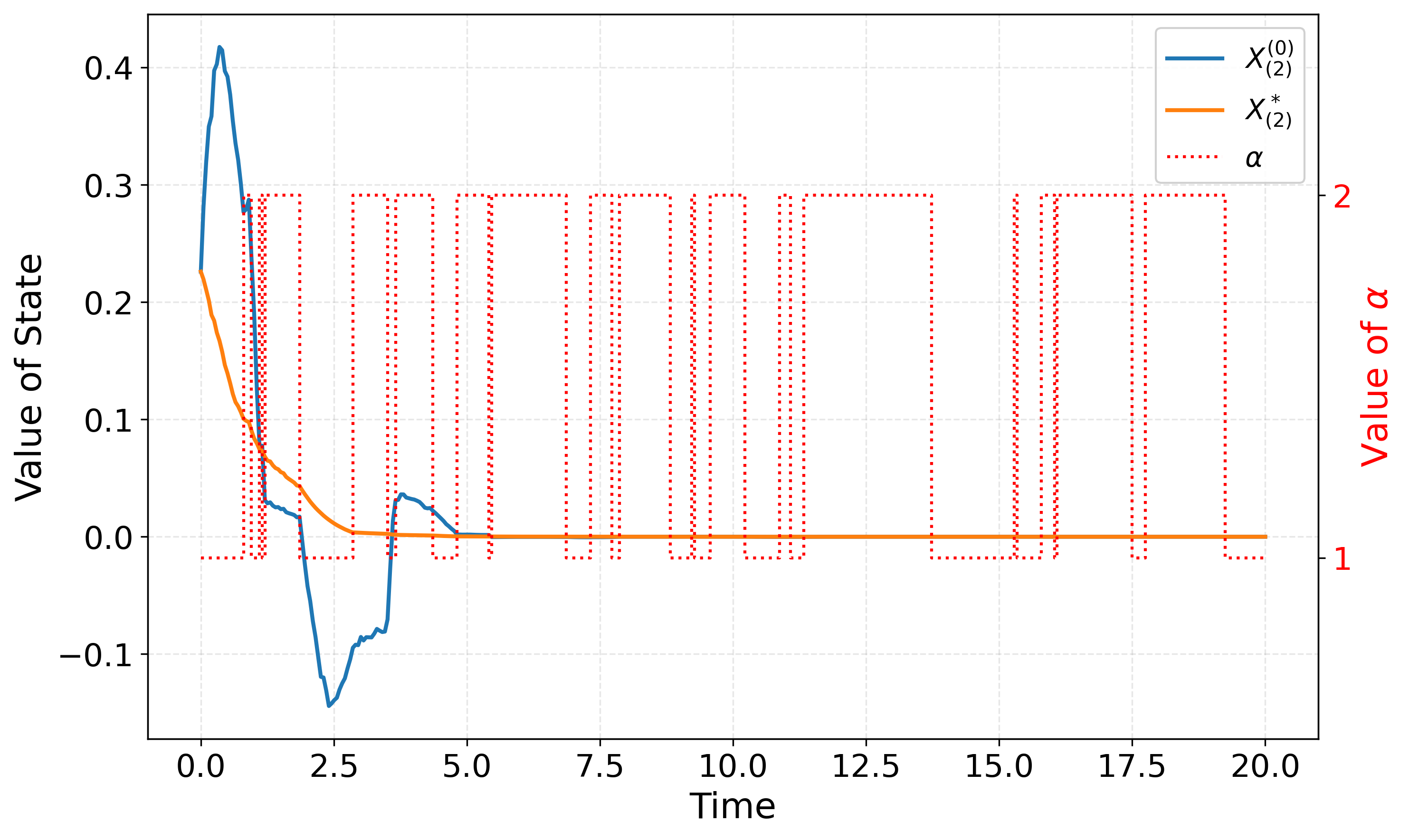}
        \caption{}
        \label{fig:X2 trajectory1}
    \end{subfigure}

    \vspace{1.5em} 

    \begin{subfigure}[b]{0.32\textwidth}
        \centering
        \includegraphics[width=\textwidth]{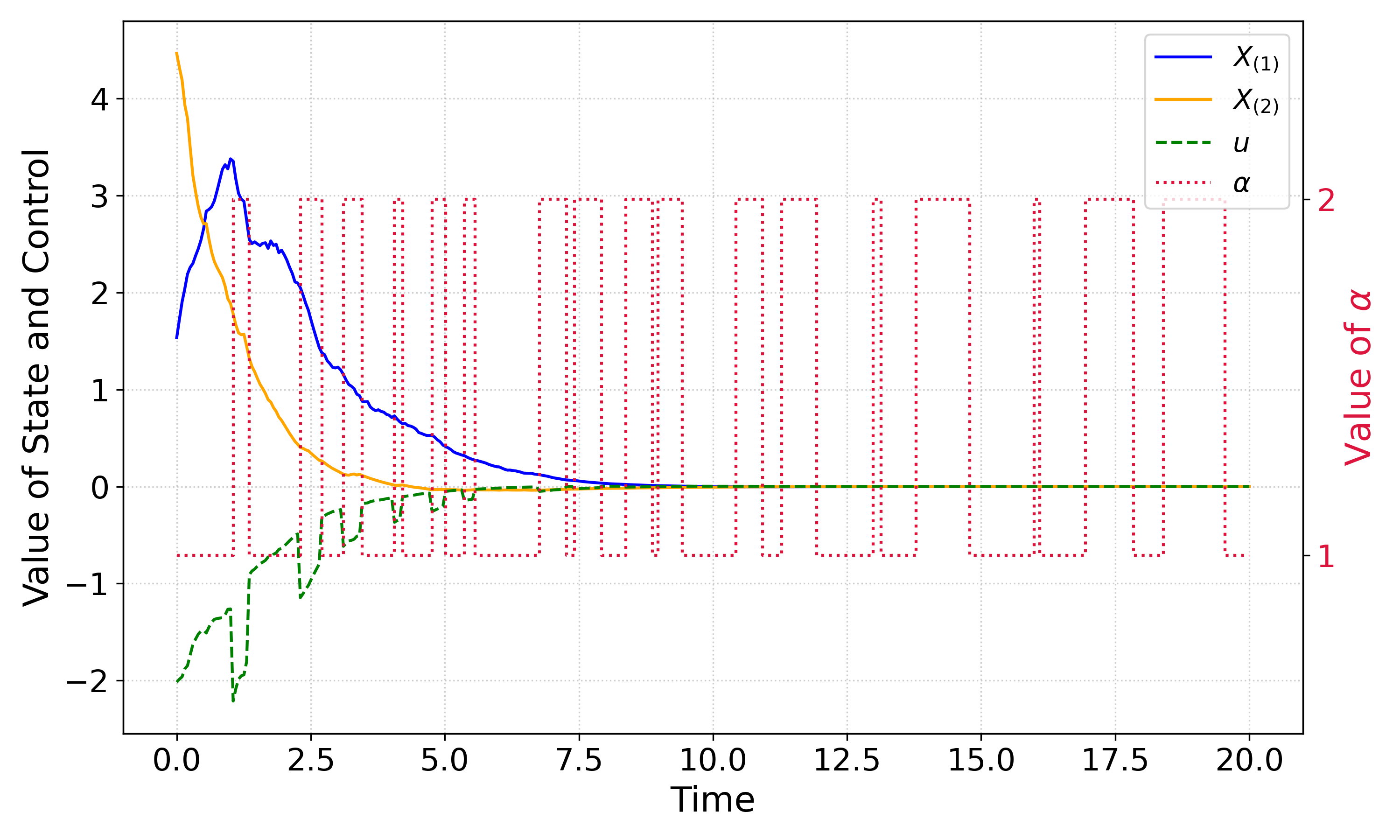}
        \caption{}
        \label{fig:X trajectory3}
    \end{subfigure}
    \hfill
    \begin{subfigure}[b]{0.32\textwidth}
        \centering
        \includegraphics[width=\textwidth]{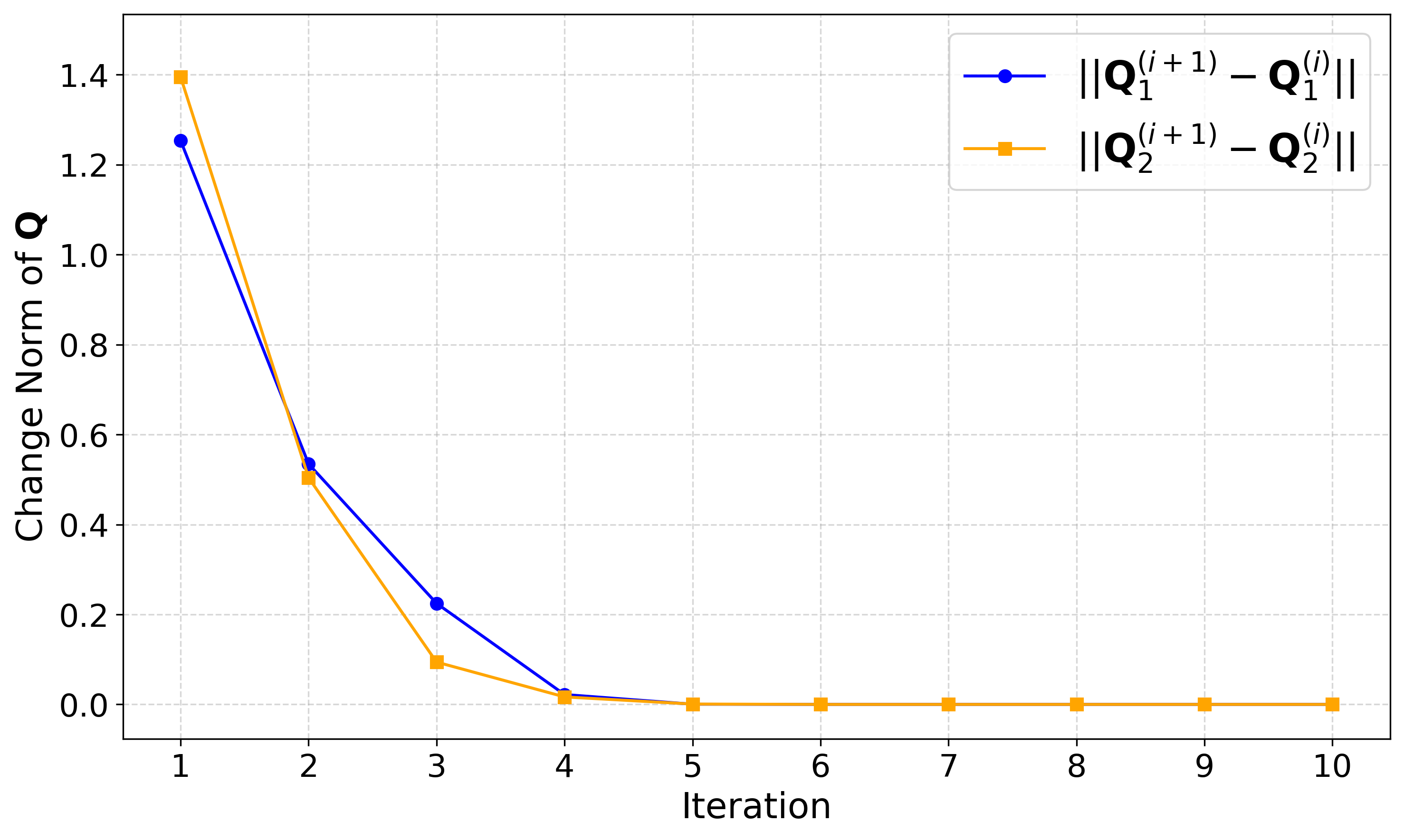}
        \caption{}
        \label{fig:convergence3}
    \end{subfigure}
    \hfill
    \begin{subfigure}[b]{0.32\textwidth}
        \centering
        \includegraphics[width=\textwidth]{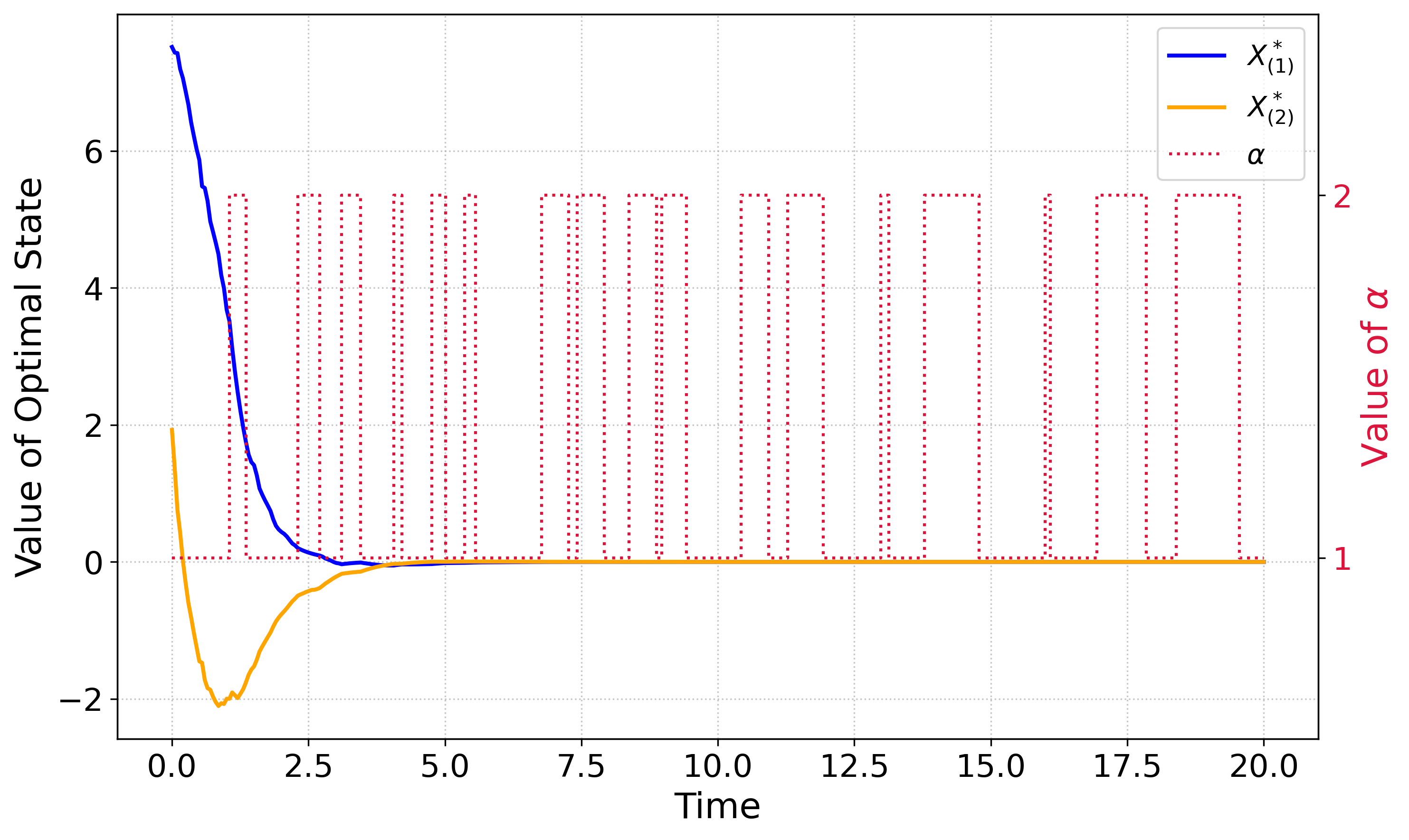}
        \caption{}
        \label{fig:X* trajectory3}
    \end{subfigure}

    \captionsetup{font=footnotesize}
    \caption{Simulation results. 
    (a)-(c) show the results for Algorithm \ref{alg:2}. 
    (a) shows the convergence of $\mathbf{Q}_k$ for $k=1,2$. 
    (b) compares $X^{(0)}_{(1)}$ and $X^{*}_{(1)}$. 
    (c) compares $X^{(0)}_{(2)}$ and $X^{*}_{(2)}$. 
    (d)-(f) show the results for Algorithm \ref{alg:3}. 
    (d) shows the trajectories of $X_{(1)}$, $X_{(2)}$, $u$, and $\alpha$. 
    (e) shows the convergence of $\mathbf{Q}_k$ for $k=1,2$. 
    (f) shows the trajectories of $X^*_{(1)}$, $X^*_{(2)}$, and $\alpha$.}
    \label{fig:overall_simulation_results}
\end{figure*}

\section{Conclusion}\label{sec.5}
~~~~In this paper, we develop a Q-learning framework for infinite-horizon continuous-time stochastic LQ optimal control problems with regime switching, effectively eliminating the need for explicit system dynamics knowledge. By exploiting the intrinsic stability properties of the system, we solve the coupled problem in infinite-horizon and propose both 
on-policy and off-policy model-free algorithms that learn optimal control gains directly from state trajectory data. Theoretical analysis establishes the equivalence, stability, and convergence of the proposed algorithms, while numerical experiments validate their practical efficacy. Some future work can be extended to the control problem with nonlinear system, $N$ players game with large population system, and so on.

\end{document}